\newtheorem{thm}{Theorem}[section]
\newtheorem{lem}[thm]{Lemma}
\newtheorem{prop}[thm]{Proposition}
\newtheorem{con}[thm]{Conjecture}
\theoremstyle{definition}
\theoremstyle{definition}
\newtheorem{question}[thm]{Question}
\theoremstyle{definition}
\newtheorem{defn}[thm]{Definition}
\theoremstyle{remark}
\newtheorem{rem}[thm]{Remark}
\numberwithin{equation}{section}
\newcommand*\circled[1]{\tikz[baseline=(char.base)]{
            \node[shape=circle,draw,inner sep=0pt,minimum size=5mm] (char) {#1};}}
\newcommand{\rootsD}[5]{\circled{#1}
            \begin{tabular}{ccc}
             &#2& \\
             #3&#4&#5
             \end{tabular}}
\newcommand{\rmnum}[1]{\romannumeral #1}
\newcommand{\Rmnum}[1]{\expandafter\@slowromancap\romannumeral #1@}
\begin{document}

\title{Complete reducibility, K\"ulshammer's question, conjugacy classes: a $D_4$ example}
\author{Tomohiro Uchiyama\\
National Center for Theoretical Sciences, Mathematics Division\\
No.~1, Sec.~4, Roosevelt Rd., National Taiwan University, Taipei, Taiwan\\
\texttt{email:t.uchiyama2170@gmail.com}}
\date{\today}
\maketitle 

\begin{abstract}
Let $k$ be a nonperfect separably closed field. Let $G$ be a connected reductive algebraic group defined over $k$. We study rationality problems for Serre's notion of complete reducibility of subgroups of $G$. In particular, we present a new example of subgroup $H$ of $G$ of type $D_4$ in characteristic $2$ such that $H$ is $G$-completely reducible but not $G$-completely reducible over $k$ (or vice versa). This is new: all known such examples are for $G$ of exceptional type. We also find a new counterexample for K\"ulshammer's question on representations of finite groups for $G$ of type $D_4$. A problem concerning the number of conjugacy classes is also considered. The notion of nonseparable subgroups plays a crucial role in all our constructions.  
\end{abstract}

\noindent \textbf{Keywords:} algebraic groups, complete reducibility, rationality, geometric invariant theory, representations of finite groups, conjugacy classes 

\section{Introduction}
Let $k$ be a field. Let $\overline k$ be an algebraic closure of $k$. Let $G$ be a connected affine algebraic $k$-group: we regard $G$ as a $\overline k$-defined algebraic group together with a choice of $k$-structure in the sense of Borel~\cite[AG.~11]{Borel-AG-book}. We say that $G$ is \emph{reductive} if the unipotent radical $R_u(G)$ of $G$ is trivial. Throughout, $G$ is always a connected reductive $k$-group. In this paper, we continue our study of rationality problems for complete reducibility of subgroups of $G$~\cite{Uchiyama-Nonperfect-pre},~\cite{Uchiyama-Nonperfectopenproblem-pre}. By a subgroup of $G$ we mean a (possibly non-$k$-defined) closed subgroup of $G$. Following Serre~\cite[Sec.~3]{Serre-building}:
\begin{defn}
A subgroup $H$ of $G$ is called \emph{$G$-completely reducible over $k$} (\emph{$G$-cr over $k$} for short) if whenever $H$ is contained in a $k$-defined parabolic subgroup $P$ of $G$, then $H$ is contained in a $k$-defined Levi subgroup of $P$. In particular if $H$ is not contained in any proper $k$-defined parabolic subgroup of $G$, $H$ is called \emph{$G$-irreducible over $k$} (\emph{$G$-ir over $k$} for short). 
\end{defn}

So far, most studies on complete reducibility is for complete reducibility over $\overline k$ only; see~\cite{Liebeck-Seitz-memoir},~\cite{Stewart-nonGcr},~\cite{Thomas-irreducible-JOA} for example. We say that a subgroup $H$ of $G$ is $G$-cr if it is $G$-cr over $\overline k$. Not much is known on complete reducibility over $k$ (especially for nonperfect $k$) except a few theoretical results and important examples; see~\cite[Sec.~5]{Bate-geometric-Inventione},~\cite{Bate-cocharacter-Arx},~\cite{Uchiyama-Nonperfect-pre},~\cite{Uchiyama-Nonperfectopenproblem-pre}. In~\cite[Thm.~1.10]{Uchiyama-Separability-JAlgebra},~\cite[Thm.~1.8]{Uchiyama-Classification-pre},~\cite[Thm.~1.2]{Uchiyama-Nonperfect-pre},~\cite[Sec.~6]{Bate-separability-TransAMS}, Bate et al.~and we found several examples of $k$-subgroups of $G$ that are $G$-cr over $k$ but not $G$-cr (or vice versa). All these examples are for $G$ of exceptional type ($E_6$, $E_7$, $E_8$, $G_2$) in $p=2$ and constructions are very intricate. The first main result in this paper is the following:  

\begin{thm}\label{D4example}
Let $k$ be a nonperfect separably closed field of characteristic $2$. Let $G$ be a simple $k$-group of type $D_4$. Then there exists a $k$-subgroup $H$ of $G$ that is $G$-cr over $k$ but not $G$-cr (or vice versa).
\end{thm}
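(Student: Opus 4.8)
The plan is to realize $H$ explicitly inside a $k$-defined parabolic subgroup $P = P_\lambda$ of $G$, combining a suitable nonseparable subgroup of $G$ with the Bate--Martin--R\"ohrle description of complements to $R_u(P_\lambda)$ in $P_\lambda$. Since $k$ is separably closed, $G$ is $k$-split, so I would fix a $k$-split maximal torus $T$, a root system $\Phi$ of type $D_4$ with base $\{\alpha_1,\alpha_2,\alpha_3,\alpha_4\}$ ($\alpha_2$ the branch node), and the $k$-defined root subgroups $U_\alpha = \{u_\alpha(t)\}$ with their Chevalley commutator relations specialized to characteristic $2$. Then choose a $k$-defined cocharacter $\lambda \in Y(T)$ so that $P := P_\lambda$ is a proper $k$-parabolic with $k$-Levi $L := L_\lambda \supseteq T$ and unipotent radical $V := R_u(P_\lambda)$; as in the known exceptional-type examples I expect $\lambda$ must be chosen so that $V$ is non-abelian (nilpotency class at least $2$).

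Next I would record the standard reductions. As $V$ is a split unipotent $k$-group, $H^1(k,V)=1$, so all $k$-Levi subgroups of $P$ are $V(k)$-conjugate; moreover $c_\lambda(u h u^{-1}) = c_\lambda(h)$ for $u \in V$ and $h \in P$, where $c_\lambda\colon P \to L$ is the canonical projection. Hence, writing $M := c_\lambda(H)$, the subgroup $H$ lies in some $k$-Levi of $P$ if and only if $H$ is $V(k)$-conjugate to $M$, and $H$ is $G$-cr if and only if $H$ is $V(\overline k)$-conjugate to $M$ and $M$ is $G$-cr. So it suffices to find an $L$-irreducible $k$-subgroup $M$ of $L$ — so $M$ is $L$-cr, hence $G$-cr — together with a $k$-subgroup $H \le P$ with $c_\lambda(H) = M$ and $H \cap V = 1$ (so $H \cong M$) that is $V(\overline k)$-conjugate to $M$ but \emph{not} $V(k)$-conjugate to $M$. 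The point is to take $M$ to be \emph{nonseparable in $G$}, i.e.\ with non-smooth scheme-theoretic centralizer $C_V(M)$; locating the right such $M$ in type $D_4$, $p=2$ — plausibly a copy of $\mathrm{SL}_2$ or a small finite subgroup of a Levi embedded with a Frobenius twist, or a subgroup afforded by the triality of $D_4$ — is the one genuinely new ingredient. Once $M$ is fixed, non-smoothness of $C_V(M)$ forces the $V$-conjugation action on a suitable one-parameter family of complements $\{H_t\}_{t \in \mathbb{A}^1}$ (with $H_0 = M$) to factor, in appropriate coordinates, through the Frobenius-twisted additive action $a\cdot x = x + a^2$ of $\mathbb{G}_a$ on $\mathbb{A}^1$: all the $H_t$ lie in a single $V(\overline k)$-orbit, but $H_t$ is $V(k)$-conjugate to $M = H_0$ only when $t \in k^2$.

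Then I would set $H := H_\xi$ for a fixed $\xi \in k \setminus k^2$ (such $\xi$ exists since $k$ is nonperfect of characteristic $2$), written concretely as the subgroup of $P(k)$ generated by the images of the Chevalley generators of $M$ conjugated by an explicit element of $V(k)$ whose ``twisting coordinate'' is $\xi$. Over $\overline k$ the element of $V(\overline k)$ with coordinate $\sqrt\xi$ conjugates $H$ onto $M \le L$, so $H$ is $V(\overline k)$-conjugate to $M$ and hence $G$-cr; over $k$ the analysis above shows $H$ is not $V(k)$-conjugate to $M$, hence lies in no $k$-Levi subgroup of $P$, hence is not $G$-cr over $k$. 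This already establishes the theorem; the reverse example ($G$-cr over $k$ but not $G$-cr) can be obtained by a dual construction — e.g.\ arranging $H$ to sit, non-Levi, inside a non-$k$-defined parabolic while remaining $k$-irreducible, or by transporting the first example through the triality automorphism of $D_4$.

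The main obstacle is the central step of the second paragraph: exhibiting a genuinely nonseparable, $L$-irreducible subgroup $M$ of $G$ of type $D_4$ in characteristic $2$ — the classical types are more rigid than the exceptional ones, which is exactly why no such example has been found before — and then carrying out the explicit computation inside the class-$\geq 2$ unipotent radical $V$, with the characteristic-$2$ Chevalley structure constants and their $2$-torsion subtleties, to verify that the obstruction to $V(k)$-conjugacy is precisely the non-squareness of $\xi$. Getting these root-group identities exactly right is the delicate part; the rest is the by-now-standard geometric and cohomological framework.
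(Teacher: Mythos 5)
Your strategic outline reproduces the paper's mechanism exactly — twist an $L$-irreducible subgroup $M$ of a Levi $L_\lambda$ by an element of $R_u(P_\lambda)(\overline k)$ whose coordinate is $\sqrt a$ for $a\in k\setminus k^2$, use nonseparability so the twisted group is still generated by $k$-points, and show that returning to a $k$-Levi would force a square root of $a$ in $k$ — but the step you defer as ``the one genuinely new ingredient'' is the entire mathematical content of the theorem, and you do not supply it. The paper's choice is completely concrete: $\lambda=(\alpha+2\beta+\gamma+\delta)^{\vee}$, so that $L_\lambda\cong A_1A_1A_1$ (the three outer nodes) and $R_u(P_\lambda)$ has nilpotency class $2$ with centre $U_{12}$; $M=\langle n_\alpha n_\gamma n_\delta,\ (\alpha+\gamma+\delta)^{\vee}(b)\rangle$ with $b$ a primitive cube root of unity, a finite subgroup embedded diagonally in the three $A_1$ factors and visibly $L_\lambda$-irreducible; and the twisting element $v(\sqrt a)=\epsilon_{4}(\sqrt a)\epsilon_{11}(\sqrt a)$. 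The two characteristic-$2$ computations that make it work are (i) $v(\sqrt a)\cdot(n_\alpha n_\gamma n_\delta)=n_\alpha n_\gamma n_\delta\,\epsilon_{12}(a)$, because $n_\alpha n_\gamma n_\delta$ swaps roots $4$ and $11$ and the commutator of $U_4$ with $U_{11}$ lands in $U_{12}$ — this is what makes $H=v(\sqrt a)\cdot M$ $k$-defined even though $v(\sqrt a)$ is not a $k$-point — and (ii) for a general $u=\prod\epsilon_\zeta(x_\zeta)\in R_u(P_\lambda)(k)$, the condition $u^{-1}\cdot\bigl(n_\alpha n_\gamma n_\delta\,\epsilon_{12}(a)\bigr)\in L_\lambda$ collapses to $(x_4+x_5+x_6+x_7)^2=a$, which is insoluble in $k$. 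Without exhibiting such an $M$ and carrying out (i) and (ii), you have restated the template known from the exceptional-group examples rather than proved the theorem for $D_4$.

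A second, genuine error concerns your fallback for the ``vice versa'' direction: transporting the first example through the triality automorphism cannot work, because triality is a $k$-defined automorphism of $G$ and therefore permutes the $k$-defined parabolic and Levi subgroups; it preserves both the property ``$G$-cr'' and the property ``$G$-cr over $k$'' and so cannot interchange them. Your other suggestion (a $k$-subgroup whose only proper parabolic overgroup fails to be $k$-defined) is the right idea and is what the paper does: it adjoins $\epsilon_5(1)$ to a conjugate of the first example, shows via a centralizer computation ($C_G(M)^{\circ}=G_{12}$) and the Bruhat decomposition that $v(\sqrt a)\cdot P_\lambda$ is the \emph{unique} proper parabolic containing the new $H$ and that it is not $k$-defined (so $H$ is $G$-ir over $k$), and then shows $C_G(H)^{\circ}$ is a nontrivial unipotent group, whence $H$ is not $G$-cr by Borel--Tits. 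This, too, requires explicit root-system computations that your proposal does not attempt.
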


A few comments are in order. First, one can embed $D_4$ inside $E_6$, $E_7$ or $E_8$ as a Levi subgroup. Since a subgroup contained in a $k$-Levi subgroup $L$ of $G$ is $G$-cr over $k$ if and only if it is $L$-cr over $k$ (Proposition~\ref{G-cr-L-cr}), one might argue that our ``new example'' is not really new. However we have checked that our example is different from any example in~\cite[Thm.~1.10]{Uchiyama-Separability-JAlgebra},~\cite[Thm.~1.8]{Uchiyama-Classification-pre},~\cite[Thm.~1.2]{Uchiyama-Nonperfect-pre}. So this is the first such example for classical $G$. Second, the non-perfectness of $k$ is essential in Theorem~\ref{D4example} in view of the following~\cite[Thm.~1.1]{Bate-separable-Paris}:
\begin{prop}\label{paris}
Let $H$ be a subgroup of $G$. Then $H$ is $G$-cr over $k$ if and only if $H$ is $G$-cr over $k_s$ (where $k_s$ is a separable closure of $k$). 
\end{prop}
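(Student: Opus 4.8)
The plan is to deduce this from the ``geometric'' reformulation of complete reducibility over a field together with a Galois descent along the separable extension $k_s/k$. One may assume $H$ is $k$-defined --- in general one replaces $H$ by the subgroup generated by a tuple $\mathbf{h}=(h_1,\dots,h_n)$ of elements and works over the field of definition of $\mathbf h$ --- and then invokes the Bate--Martin--R\"ohrle dictionary: for a field $F\supseteq k$, the subgroup $H$ is $G$-cr over $F$ if and only if the $G$-orbit of $\mathbf h$ in $G^n$ under simultaneous conjugation is \emph{cocharacter-closed over $F$}, meaning that for every $F$-defined cocharacter $\mu$ of $G$ for which $\mathbf h':=\lim_{t\to0}\mu(t)\cdot\mathbf h$ exists one has $\mathbf h'\in G(F)\cdot\mathbf h$. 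So it suffices to prove: the $G$-orbit of a $k$-rational point $\mathbf h\in G^n$ is cocharacter-closed over $k$ if and only if it is cocharacter-closed over $k_s$.

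For the implication ``cocharacter-closed over $k$ $\Rightarrow$ over $k_s$'' I would argue contrapositively using Hesselink--Kempf--Rousseau optimality. If $\mathbf h$ is destabilised over $k_s$ by some cocharacter, the optimality machinery (in the form valid over an arbitrary base field) attaches to $\mathbf h$ a \emph{canonical} instability parabolic $P(\mathbf h)$ and a canonical class of optimal destabilising cocharacters, the class being a single $R_u(P(\mathbf h))$-orbit up to rescaling. Canonicity and the $k$-rationality of $\mathbf h$ force $P(\mathbf h)$ to be $\mathrm{Gal}(k_s/k)$-stable, hence $k$-defined; and since $R_u(P(\mathbf h))$ is $k$-split, its Galois cohomology over $k$ vanishes, so the optimal class contains a $k$-defined cocharacter, which destabilises $\mathbf h$ over $k$ and contradicts cocharacter-closedness over $k$. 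For the converse, let $\lambda$ be a $k$-defined cocharacter with $\mathbf h':=\lim_{t\to0}\lambda(t)\cdot\mathbf h$ existing; cocharacter-closedness over $k_s$ gives $\mathbf h'\in G(k_s)\cdot\mathbf h$, and --- by the standard lemma that, when the limit $\mathbf h'$ lies in the $G$-orbit of $\mathbf h$, it already lies in the $R_u(P_\lambda)(k_s)$-orbit of $\mathbf h$ --- the transporter $Y=\{u\in R_u(P_\lambda):u\cdot\mathbf h=\mathbf h'\}$, a $k$-defined subvariety of the $k$-split unipotent group $R_u(P_\lambda)$ which is a torsor under the closed subgroup scheme $C_{R_u(P_\lambda)}(H)$, has a $k_s$-point. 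Because $k_s/k$ is separable, the obstruction to upgrading this to a $k$-point is a Galois cohomology class built from $R_u(P_\lambda)(k_s)$-type data in which the purely inseparable pathologies do not occur, so $Y(k)\neq\varnothing$ and $\mathbf h'\in G(k)\cdot\mathbf h$, as needed.

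The step I expect to be the real obstacle is making this descent rigorous: for non-separable $H$ the stabiliser and transporter group schemes above are non-smooth, and over imperfect $k$ even a smooth connected subgroup of a $k$-split unipotent group can be $k$-wound, so the naive vanishing $H^1(k,\mathbb G_a)=0$ is not directly available. One must therefore isolate exactly which part of the usual obstruction to ``$G$-cr $\Rightarrow$ $G$-cr over $k$'' is inseparable in nature --- hence killed by the \emph{separable} extension $k_s/k$ while persisting over purely inseparable extensions --- and this last point is precisely what leaves room for Theorem~\ref{D4example} to exist over genuinely nonperfect $k$. A secondary piece of bookkeeping is to carry the whole argument through for possibly non-$k$-defined $H$ by working consistently with the tuple $\mathbf h$ and its field of definition.
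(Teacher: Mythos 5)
The first thing to say is that the paper contains no proof of Proposition~\ref{paris} to compare against: it is imported verbatim from Bate--Martin--R\"ohrle~\cite[Thm.~1.1]{Bate-separable-Paris}, and the surrounding text explicitly warns that the result is ``deep'' because its known proof depends on the Tits center conjecture (Conjecture~\ref{centerconjecture}). Your proposal tries to route around the building-theoretic argument entirely, and the two places where it would have to supply equivalent depth are exactly the places where it is incomplete. (i) The ``dictionary'' you start from --- $H$ is $G$-cr over $F$ if and only if $G\cdot\mathbf{h}$ is cocharacter-closed over $F$ --- is not an elementary reformulation. Only the implication ``cocharacter-closed over $k$ $\Rightarrow$ $G$-cr over $k$'' is available unconditionally; the converse, which you need for both fields $k$ and $k_s$, is established in the Bate--Herpel--Martin--R\"ohrle papers only under additional hypotheses, and where it is proved the argument itself invokes the center conjecture. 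So your opening reduction (together with the unexamined replacement of $H$ by a $k$-defined subgroup generated by a tuple) already assumes something at least as hard as the target statement. (ii) The descent step you yourself flag as ``the real obstacle'' is genuinely fatal as written: the transporter $Y$ is a torsor under the scheme-theoretic centralizer $C_{R_u(P_\lambda)}(H)$, which over nonperfect $k$ can fail to be smooth --- this is precisely the nonseparability phenomenon the whole paper is built on (Remark~\ref{D4nonsep}) --- and for non-smooth group schemes the passage from $Y(k_s)\neq\varnothing$ to $Y(k)\neq\varnothing$ is not governed by Galois cohomology of $k_s$-points of a split unipotent group, so the appeal to the vanishing of $H^1(\mathrm{Gal}(k_s/k),k_s^{+})$ does not apply. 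Asserting that ``the purely inseparable pathologies do not occur'' is the conclusion to be proved, not an argument.

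For contrast, the actual proof in~\cite{Bate-separable-Paris} sidesteps both issues by working directly in the spherical building: if $H$ is not $G$-cr over $k_s$, Serre's result (Proposition~\ref{SerreContractible}) makes the fixed-point complex $\Delta(G_{k_s})^{H}$ convex and contractible, the Galois group $\mathrm{Gal}(k_s/k)$ acts on it, and the center conjecture produces a Galois-fixed simplex, i.e.\ a $k$-defined parabolic subgroup containing $H$, from which the comparison of Levi subgroups over $k$ and over $k_s$ follows. Your Kempf--Rousseau optimality argument for one implication (canonical instability parabolic, Galois-stable hence $k$-defined, with $k$-split unipotent radical) is the right shape and is essentially how the rational Hilbert--Mumford theorem is deployed in this circle of ideas; but without (i) being a theorem you may quote and without a genuine resolution of (ii), the proposal does not constitute a proof.
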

So in particular if $k$ is perfect, a subgroup of $G$ is $G$-cr over $k$ if and only if it is $G$-cr. Proposition~\ref{paris} is deep: it depends on the recently proved $50$-years-old \emph{center conjecture of Tits} (see Conjecture~\ref{centerconjecture}) in spherical buildings~\cite{Serre-building},~\cite{Tits-colloq},~\cite{Weiss-center-Fourier}. Third, the $k$-definedness of $H$ in Theorem~\ref{D4example} is important. 
Actually it is not difficult to find a $\overline k$-subgroup with the desired property. For our construction of a $k$-defined subgroup $H$, it is essential for $H$ to be $\emph{nonseparable}$ in $G$. We write $\textup{Lie}(G)$ or $\mathfrak g$ for the Lie algebra of $G$. Recall~\cite[Def.~1.1]{Bate-separability-TransAMS}:
\begin{defn}
A subgroup $H$ of $G$ is \emph{nonseparable} if the dimension of $\textup{Lie}(C_G(H))$ is strictly smaller than the dimension of $\mathfrak{c}_{\mathfrak{g}}(H)$ (where $H$ acts on $\mathfrak g$ via the adjoint action). In other words, the scheme-theoretic centralizer of $H$ in $G$ (in the sense of~\cite[Def.~A.1.9]{Conrad-pred-book}) is not smooth. 
\end{defn} 
We exhibit the importance of nonseparability of $H$ in the proof of Theorem~\ref{D4example}. Proper nonseparable $k$-subgroups of $G$ are hard to find, and only handful examples are known~\cite[Sec.~7]{Bate-separability-TransAMS},~\cite[Thm.~1.10]{Uchiyama-Separability-JAlgebra}~\cite[Thm.~1.8]{Uchiyama-Classification-pre},~\cite[Thm.~1.2]{Uchiyama-Nonperfect-pre}. It is known that if $p$ is very good for $G$, every subgroup of $G$ is separable~\cite[Thm.~1.2]{Bate-separability-TransAMS}. Thus, to find a nonseparable subgroup we are forced to work in small $p$. See~\cite{Bate-separability-TransAMS},~\cite{Herpel-smoothcentralizerl-trans} for more on separability. 

In the rest of this section we assume $k$ is algebraically closed. In~\cite{Uchiyama-Separability-JAlgebra}, we asked:
\begin{question}\label{G-cr-M-cr-Q}
Let $H\leq M\leq G$ be a triple of reductive groups with $G$ and $M$ connected. If $H$ is $G$-cr then it is $M$-cr (and vice versa)?
\end{question} 
In general, the answer is no in either direction. It is easy to find a counterexample for the reverse direction: take $H=M=PGL_2$ and $G=SL_3$ in $p=2$ and $H$ sits inside $G$ via the adjoint representation. For more counterexamples, see~\cite{Liebeck-Seitz-memoir},~\cite{Stewart-nonGcr}. A counterexample for the forward direction is hard to find and only a handful such examples are known~\cite[Thm.~1.1]{Uchiyama-Separability-JAlgebra},~\cite[Thm.~1.2]{Uchiyama-Classification-pre},~\cite[Sec.~6]{Bate-separability-TransAMS}. All these examples are for $G$ of exceptional type ($E_6$, $E_7$, $E_8$, $G_2$) in $p=2$. Here is our second main result:
\begin{thm}\label{G-cr-M-cr}
Let $k$ be of characteristic $2$. Let $G$ be simple and of type $D_4$. Then there exists a pair of reductive subgroups $H<M$ of $G$ such that $(G,M)$ is a reductive pair and $H$ is $G$-cr but not $M$-cr.
\end{thm}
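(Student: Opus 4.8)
My plan is to build the pair $H<M$ inside $G$ of type $D_4$ by reverse-engineering the nonseparable $k$-subgroup underlying Theorem~\ref{D4example}, now regarded over the algebraic closure. The guiding principle, taken from the known exceptional-type counterexamples (\cite[Thm.~1.1]{Uchiyama-Separability-JAlgebra}, \cite[Sec.~6]{Bate-separability-TransAMS}), is that a failure of the implication ``$G$-cr $\Rightarrow$ $M$-cr'' is detected by nonseparability: if $(G,M)$ is a reductive pair and $H\leq M$ is separable in $G$, then $H$ being $G$-cr forces it to be $M$-cr, so the counterexample $H$ must be nonseparable in $M$ (equivalently in $G$). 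So first I would fix a reductive pair $(G,M)$ with $M$ a proper connected reductive subgroup of the $D_4$ group $G$ in characteristic $2$ — the natural candidate being $M$ of type $A_1^{4}$ or a subsystem subgroup such as $D_2\times D_2 = A_1^4$, or $M$ of type $B_3$ or $A_1 C_3$, chosen so that $\mathfrak{g} = \mathfrak{m}\oplus\mathfrak{m}^{\perp}$ as $M$-modules (this is exactly the reductive-pair condition). The triality symmetry of $D_4$ in $p=2$ is what makes this work and is unavailable in the classical groups $A_n,B_n,C_n$ treated before.

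Next I would produce an explicit finite (or small unipotent-by-finite) subgroup $H$ inside $M$. Following the pattern of the earlier constructions, I expect $H$ to be generated by a carefully chosen unipotent element together with an involution coming from a torus normalizer, engineered so that the $M$-conjugation orbit map at $H$ is nonseparable — i.e. $\dim \mathrm{Lie}(C_M(H)) > \dim \mathfrak{c}_{\mathfrak m}(H)$ — while the same element, viewed in $G$, lies in a subgroup whose $G$-centralizer is smooth. Concretely I would: (i) exhibit a cocharacter $\lambda$ of $M$ with $H \leq P_\lambda$ but $H$ not contained in any Levi of any parabolic of $M$ containing it, which shows $H$ is not $M$-cr; and (ii) show that inside $G$ the group $H$ is $G$-ir, or at least is contained in no proper parabolic of $G$ without also being contained in a Levi there, which shows $H$ is $G$-cr. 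Step (i) is where nonseparability enters: I would use the standard criterion (as in \cite{Bate-separability-TransAMS}) that nonseparability of $H$ in $M$ lets $H$ destabilize a parabolic of $M$ via a cocharacter that does not come back to a Levi, so the usual GIT/Hilbert–Mumford argument that would otherwise force $M$-complete reducibility fails.

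For the $G$-cr half I would lean on a dimension/centralizer computation: compute $C_G(H)$ and $C_G(H)^{\circ}$, verify it is reductive (by Bate–Martin–Röhrle, $H$ is $G$-cr iff $C_G(H)$ is $G$-cr, and in particular $G$-cr follows if $C_G(H)$ is reductive and $H$ is not inside a proper parabolic, or more directly by exhibiting $H$ as $G$-ir via a root-system argument in $D_4$). The triality outer automorphisms acting on the three $8$-dimensional representations (vector and two half-spins, all $8$-dimensional and permuted in $p\neq 3$... here just permuted by triality) give the needed flexibility to place $H$ so that it is visible as irreducible in $G$ but reducible in $M$. The main obstacle I anticipate is the explicit nonseparability verification in step (i): one must pin down generators of $H$ in Chevalley-basis coordinates, compute the scheme-theoretic centralizer $C_M(H)$ and its Lie algebra, and check the strict inequality of dimensions — this is a delicate $p=2$ linear-algebra computation (the fixed points of $H$ on $\mathfrak m$ versus the tangent space to the honest centralizer), and getting $H$ small enough to compute with yet ``bad'' enough to be nonseparable is the crux. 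Once that inequality holds, the failure of $M$-complete reducibility should follow from the machinery already cited, and the $G$-cr side should be comparatively routine.
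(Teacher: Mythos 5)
Your plan correctly guesses the reductive pair the paper uses ($M$ the subsystem subgroup of type $A_1A_1A_1A_1$) and correctly identifies nonseparability as the obstruction that must be present, but it contains no actual construction of $H$, and the parts you do sketch would not assemble into a proof. The missing idea is a specific twist: start with $H'=\langle n_\alpha n_\gamma n_\delta,\ (\alpha+\gamma+\delta)^{\vee}(b)\rangle$ inside the Levi $L_\lambda$ for $\lambda=(\alpha+2\beta+\gamma+\delta)^{\vee}$, which is $L_\lambda$-irreducible and hence $G$-cr, and conjugate it by the unipotent element $v(a)=\epsilon_4(a)\epsilon_{11}(a)\in R_u(P_\lambda)$. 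This element does \emph{not} lie in $M=\langle G_\alpha,G_\gamma,G_\delta,G_{12}\rangle$, yet it carries $H'$ to $H=\langle n_\alpha n_\gamma n_\delta\,\epsilon_{12}(a^2),\ (\alpha+\gamma+\delta)^{\vee}(b)\rangle$, which \emph{does} lie in $M$. Then $H$ is $G$-cr for free (it is $G$-conjugate to $H'$), while it fails to be $M$-cr because the limit along $\lambda$ of its generating tuple exists and equals the tuple for $H'$, but cannot be reached by conjugation under $R_u(P_\lambda(M))=U_{12}$, since $U_{12}$ centralizes $n_\alpha n_\gamma n_\delta$ and so cannot remove the factor $\epsilon_{12}(a^2)$; Proposition~\ref{unipotentconjugate} then finishes. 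In particular the failure of $M$-complete reducibility is a one-line GIT computation, not the ``delicate $p=2$ linear-algebra'' comparison of $\dim\textup{Lie}(C_M(H))$ with $\dim\mathfrak{c}_{\mathfrak m}(H)$ that you flag as the crux: nonseparability explains \emph{why} such an example can exist, but it is never invoked in the proof itself.

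Three of your specific assertions also need correcting. First, your nonseparability inequality is reversed: one always has $\textup{Lie}(C_M(H))\subseteq\mathfrak{c}_{\mathfrak m}(H)$, so nonseparability means $\dim\textup{Lie}(C_M(H))<\dim\mathfrak{c}_{\mathfrak m}(H)$, not $>$. Second, you ask that $H$, viewed in $G$, have smooth centralizer; this is precisely what must \emph{fail}, since by the proposition you yourself quote, a subgroup separable in $G$ inside a reductive pair cannot give a counterexample --- the relevant $H$ is nonseparable in $G$ (Remark~\ref{D4nonsep}). Third, $H$ cannot be shown $G$-ir, because it is not: it lies in the proper parabolic $P_\lambda$ (indeed in $P_\lambda(M)$), and its $G$-complete reducibility comes exactly from being conjugate into a Levi by an element of $G$ that is unavailable inside $M$.
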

Recall that a pair of reductive groups $G$ and $M$ is called a \emph{reductive pair} if $\textup{Lie} M$ is an $M$-module direct summand of $\mathfrak{g}$. See~\cite{Goodbourn-reductivepairs} for more on reductive pairs. For our construction, nonseparablity of $H$ is essential~\cite[Thm.~1.4]{Bate-separability-TransAMS}: 
\begin{prop}
Suppose that $(G,M)$ is a reductive pair. Let $H$ be a subgroup of $M$ such that $H$ is separable in $G$. If $H$ is $G$-cr then $H$ is $M$-cr. 
\end{prop}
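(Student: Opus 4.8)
The plan is to use the geometric characterisation of complete reducibility from~\cite{Bate-geometric-Inventione}: if $H$ is topologically generated by a tuple $\mathbf h=(h_1,\dots,h_n)\in M^n\subseteq G^n$, then $H$ is $G$-cr (resp.\ $M$-cr) precisely when the orbit $G\cdot\mathbf h$ (resp.\ $M\cdot\mathbf h$) is closed in $G^n$ (resp.\ $M^n$), where $G$ and $M$ act by simultaneous conjugation. So, assuming $G\cdot\mathbf h$ is closed, it suffices to show $M\cdot\mathbf h$ is closed in $M^n$. Put $N:=G\cdot\mathbf h$ and regard $M^n$ as a closed subvariety of $G^n$; then $N\cap M^n$ is closed, $M$-stable, and contains $M\cdot\mathbf h$, so the task reduces to showing $M\cdot\mathbf h$ is closed inside $N\cap M^n$.

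The crux is a tangent-space computation at $\mathbf h$, and this is where both hypotheses are used. Identify $T_{\mathbf h}(G^n)$ with $\mathfrak g^n$ and $T_{\mathbf h}(M^n)$ with $\mathfrak m^n$ by translation. The differential at the identity of the orbit map $G\to G\cdot\mathbf h$ is $X\mapsto((\mathrm{Ad}(h_i)-1)X)_{i=1}^n$, with kernel $\mathfrak c_{\mathfrak g}(H)$; its image therefore has dimension $\dim\mathfrak g-\dim\mathfrak c_{\mathfrak g}(H)$, which by separability of $H$ in $G$ equals $\dim G-\dim C_G(H)=\dim N$, so the image is all of $T_{\mathbf h}N$. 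Since $(G,M)$ is a reductive pair there is an $M$-module decomposition $\mathfrak g=\mathfrak m\oplus\mathfrak n$, and each $\mathrm{Ad}(h_i)$ (with $h_i\in M$) preserves both summands; writing $X=X_{\mathfrak m}+X_{\mathfrak n}$ one sees $((\mathrm{Ad}(h_i)-1)X)_i\in\mathfrak m^n$ iff $X_{\mathfrak n}\in\mathfrak n^H$, in which case the $\mathfrak n$-part is zero, so
\[
T_{\mathbf h}N\cap\mathfrak m^n=\{((\mathrm{Ad}(h_i)-1)Y)_i:Y\in\mathfrak m\},
\]
i.e.\ exactly the image of the differential of the orbit map $M\to M\cdot\mathbf h$, which has dimension $\dim\mathfrak m-\dim\mathfrak c_{\mathfrak m}(H)\le\dim M\cdot\mathbf h$. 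Chaining $T_{\mathbf h}(M\cdot\mathbf h)\subseteq T_{\mathbf h}(N\cap M^n)\subseteq T_{\mathbf h}N\cap\mathfrak m^n\subseteq T_{\mathbf h}(M\cdot\mathbf h)$ forces equality throughout; in particular $\dim T_{\mathbf h}(N\cap M^n)=\dim M\cdot\mathbf h\le\dim_{\mathbf h}(N\cap M^n)\le\dim T_{\mathbf h}(N\cap M^n)$, so $N\cap M^n$ is smooth at $\mathbf h$ of local dimension $\dim M\cdot\mathbf h$, and $\overline{M\cdot\mathbf h}$ is the unique irreducible component of $N\cap M^n$ through $\mathbf h$. (As a byproduct, $H$ is separable in $M$ as well.)

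To conclude, suppose $M\cdot\mathbf h$ is not closed. Then $\overline{M\cdot\mathbf h}\setminus M\cdot\mathbf h$ is a nonempty closed $M$-stable set and hence contains a closed $M$-orbit $M\cdot\mathbf y$ with $\dim M\cdot\mathbf y<\dim M\cdot\mathbf h$. As $\mathbf y\in N=G\cdot\mathbf h$, write $\mathbf y=g\cdot\mathbf h$; then $\mathbf y$ topologically generates $gHg^{-1}$, which lies in $M$ (since $\mathbf y\in M^n$), is $G$-cr, and is separable in $G$, these being conjugation-invariant properties. Applying the previous paragraph to $\mathbf y$ in place of $\mathbf h$ (note $G\cdot\mathbf y=N$) shows $N\cap M^n$ is smooth at $\mathbf y$ of local dimension $\dim M\cdot\mathbf y$. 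But $\mathbf y\in\overline{M\cdot\mathbf h}\subseteq N\cap M^n$ and $\overline{M\cdot\mathbf h}$ is irreducible of dimension $\dim M\cdot\mathbf h$, so $\dim_{\mathbf y}(N\cap M^n)\ge\dim M\cdot\mathbf h>\dim M\cdot\mathbf y$, a contradiction. Hence $M\cdot\mathbf h$ is closed and $H$ is $M$-cr.

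The step I expect to be the main obstacle is the tangent-space identification: checking carefully that the reductive-pair splitting $\mathfrak g=\mathfrak m\oplus\mathfrak n$ is what controls $T_{\mathbf h}N\cap\mathfrak m^n$, and that separability of $H$ in $G$ is exactly the input needed to pass from ``image of the orbit-map differential'' to ``full Zariski tangent space of $N$''. The remaining ingredients --- the Hilbert--Mumford-type descent to a closed orbit in the boundary and the orbit/stabiliser dimension bookkeeping --- are routine once that is set up.
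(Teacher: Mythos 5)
Your argument is correct, and it is essentially the proof of the cited source: this proposition is quoted in the paper from Bate--Martin--R\"ohrle (\emph{Complete reducibility and separability}, Thm.~1.4) without proof, and their proof is exactly this Richardson--Slodowy tangent-space argument --- separability of $H$ in $G$ to identify the image of the orbit-map differential with $T_{\mathbf h}(G\cdot\mathbf h)$, the reductive-pair splitting $\mathfrak g=\mathfrak m\oplus\mathfrak n$ to cut that tangent space down to the $M$-orbit direction, and descent to a closed $M$-orbit of smaller dimension in the boundary to reach a contradiction. The only point to keep in mind is the standing convention (also used in the paper's Proposition on the geometric characterization) that $H$ is topologically generated by the tuple $\mathbf h$, so that $C_G(\mathbf h)=C_G(H)$ and $\mathfrak c_{\mathfrak g}(\mathbf h)=\mathfrak c_{\mathfrak g}(H)$; with that in place your chain of tangent-space inclusions and the smoothness bookkeeping are all sound.
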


Now we move on to a problem with a slightly different flavor. Let $\Gamma$ be a finite group. By a representation of $\Gamma$ in a reductive group $G$, we mean a homomorphism from $\Gamma$ to $G$. We write $\textup{Hom}(\Gamma, G)$ for the set of representations $\rho$ of $\Gamma$ in $G$. The group $G$ acts on $\textup{Hom}(\Gamma, G)$ by conjugation. Let $\Gamma_p$ be a Sylow $p$-subgroup of $G$. In \cite[Sec.~2]{Kulshammer-Donovan-Israel}, K\"ulshammer asked: 
\begin{question}\label{KulshammerQ}
Let $G$ be a reductive algebraic group defined over an algebraically closed field of characteristic $p$. Let $\rho_p \in \textup{Hom}(\Gamma_p, G)$. Then are there only finitely many representations $\rho \in \textup{Hom}(\Gamma, G)$ such that $\rho\mid_{\Gamma_p}$ is $G$-conjugate to $\rho_p$? 
\end{question}
It is known that in general the answer is no. Two counterexamples are known: one in $G$ of type $G_2$~\cite{Bate-QuestionOfKulshammer} and the other in $G$ of type $E_6$~\cite[Thm.~1.14]{Uchiyama-Classification-pre} (both in $p=2$). The third main result in this paper is 
\begin{thm}\label{thmKul}
Let $k$ be of characteristic $2$. Let $G$ be simple of type $D_4$. Then there exists a finite group $\Gamma$ with a Sylow $2$-subgroup $\Gamma_2$ and representations $\rho_a\in \textup{Hom}(\Gamma,G)$ for $a\in k$ such that $\rho_a$ is not conjugate to $\rho_b$ for $a\neq b$ but the restrictions $\rho_a\mid_{\Gamma_2}$ are not pairwise conjugate for all $a\in k$. 
\end{thm}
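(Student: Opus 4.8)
The plan is to transport the strategy behind the $G_2$ counterexample of~\cite{Bate-QuestionOfKulshammer} and the $E_6$ counterexample of~\cite[Thm.~1.14]{Uchiyama-Classification-pre} to $G$ of type $D_4$, driven by the same nonseparability phenomenon that powers Theorem~\ref{D4example}. I would take $\Gamma$ to be a finite group with a \emph{normal} Sylow $2$-subgroup $\Gamma_2$, so that by Schur--Zassenhaus $\Gamma = \Gamma_2 \rtimes C$ with $C$ of odd order; construct a homomorphism $\rho_0\in\textup{Hom}(\Gamma,G)$ whose image $\rho_0(\Gamma)$ is $G$-cr and whose restriction to $\Gamma_2$ has \emph{nonseparable} image $A:=\rho_0(\Gamma_2)$ in $G$; and then deform $\rho_0$ in a one-parameter family $(\rho_a)_{a\in k}$ that is constant on a chosen complement $C$ while $\rho_a|_{\Gamma_2}$ moves only inside a single $G$-conjugacy class. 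That the $\rho_a$ are nonetheless pairwise non-$G$-conjugate — so that Question~\ref{KulshammerQ} fails — will be forced by the non-smoothness of the centralizer scheme of $A$; insisting that $\rho_0(\Gamma)$ be $G$-cr is what makes this a genuine counterexample rather than an artefact of a non-closed orbit.

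First I would fix a maximal torus $T$ and Borel $B\supseteq T$ of $G=D_4$, with roots $\Psi$ and root subgroups $U_\alpha\cong\mathbb{G}_a$; since $\textup{char}\,k=2$ each $U_\alpha$ has exponent $2$, so a finite subgroup of $R_u(B)$ contained in a product of pairwise commuting root subgroups is an elementary abelian $2$-group. In the spirit of the construction underlying Theorem~\ref{D4example} I would produce an explicit such $A\leq G$ — generated by finitely many unipotent elements of the shape $u_\alpha(1)$ and Frobenius-twisted ``graph'' elements $u_\beta(s)u_\gamma(s^2)$ — arranged so that (i) $A$ is nonseparable in $G$, i.e.\ $\dim\mathfrak{c}_{\mathfrak g}(A)>\dim\textup{Lie}(C_G(A))$, and (ii) there is a torus element $t_0\in N_G(A)\cap T$ of some odd order $m$ acting on the $\mathbb{F}_2$-space $A$ as the companion matrix of a single cyclic $\mathbb{F}_2[\mathbb{Z}/m]$-module. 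Put $C:=\langle t_0\rangle\cong\mathbb{Z}/m$, $\Gamma:=A\rtimes C$, $\Gamma_2:=A$, and let $\rho_0\colon\Gamma\hookrightarrow G$ be the inclusion. I would then check, by the usual parabolic/Levi bookkeeping for $D_4$, that $\rho_0(\Gamma)=\langle A,t_0\rangle$ is $G$-cr — most plausibly by exhibiting a minimal $k$-Levi $L\supseteq\langle A,t_0\rangle$ in which it is $L$-irreducible.

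Next comes the family. Let $\mathcal{V}\subseteq G$ be the closed subvariety of elements $g$ such that $(g,t_0)$ satisfies the defining relations of $\Gamma$ relative to the complement $C$ — concretely $g^2=1$, the conjugates ${}^{t_0^{i}}g$ pairwise commute, and the $\mathbb{F}_2[C]$-module relation of $A$ holds — so that each $g\in\mathcal{V}$ gives a homomorphism $(g,t_0)\colon\Gamma\to G$ agreeing with $\rho_0$ on $C$. Write $v_0\in\mathcal{V}$ for the chosen module generator of $A$. The crux is a dimension count: nonseparability of $A$ makes $\mathcal{V}$ (indeed $\mathcal{V}\cap(G\!\cdot\!v_0)$) strictly larger than the $C_G(t_0)$-orbit of $v_0$, so there is an irreducible curve $a\mapsto v_a$ in $\mathcal{V}$ through $v_0$, consisting of $G$-conjugates of $v_0$ but transverse to that orbit — a curve which, in practice, one writes down explicitly in terms of $a$. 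Setting $\rho_a:=(v_a,t_0)$, the submodule $A_a:=\langle v_a\rangle_C$ is $G$-conjugate to $A$, and in fact $\rho_a|_{\Gamma_2}$ is $G$-conjugate to $\rho_0|_{\Gamma_2}$ for every $a$; whereas if $g\rho_a g^{-1}=\rho_b$ for some $g\in G$ then $g$ fixes $\rho_a(t_0)=t_0$, hence $g\in C_G(t_0)$ and $g v_a g^{-1}=v_b$, which — the curve being transverse to the $C_G(t_0)$-orbit, after shrinking if necessary — forces $a=b$. This gives pairwise non-conjugate $\rho_a$ with pairwise $G$-conjugate restrictions to $\Gamma_2$, which is the assertion.

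The main obstacle is concentrated in the construction of $A$. One must pin down, inside $D_4$ in characteristic $2$, a finite $2$-subgroup that is genuinely nonseparable \emph{and} carries an odd-order normalizing torus element acting with a single cyclic $\mathbb{F}_2$-orbit structure — which forces a careful choice of the roots $\beta,\gamma$ and of the Frobenius twist, together with a hands-on computation of both $\dim C_G(A)$ and $\dim\mathfrak{c}_{\mathfrak g}(A)$ (equivalently, of the scheme-theoretic centralizer). One then has to verify that the excess $\dim\mathfrak{c}_{\mathfrak g}(A)-\dim\textup{Lie}(C_G(A))$ survives passage to $t_0$-fixed points and really does enlarge $\mathcal{V}$ beyond a single $C_G(t_0)$-orbit, so that the curve $a\mapsto v_a$ exists with its members $G$-conjugate while their $C_G(t_0)$-orbits separate. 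The remaining ingredients — the Schur--Zassenhaus reduction, the $G$-complete-reducibility check, and the orbit bookkeeping — are routine once $A$ and $t_0$ are in hand.
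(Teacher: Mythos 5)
Your proposal correctly identifies the engine of the counterexample (a nonseparable subgroup whose non-smooth centralizer scheme produces a one-parameter family of lifts that are conjugate on the Sylow $2$-part but not globally), and your deformation/limit picture is in the right spirit. But there is a genuine gap: the entire content of the theorem is the explicit construction of the pair $(A,t_0)$, and you defer exactly that step ("the main obstacle is concentrated in the construction of $A$") while committing to a framework in which it is not clear such a pair exists in $D_4$. You insist on $\Gamma=\Gamma_2\rtimes C$ with a \emph{normal} elementary abelian Sylow $2$-subgroup $A\leq R_u(B)$ built from commuting root elements, acted on cyclically by an odd-order torus element $t_0$. The paper does something structurally different: it takes $\Gamma=D_{2d}\times C_2$, whose Sylow $2$-subgroup $\Gamma_2=\langle s,z\rangle$ is a \emph{non-normal} Klein four group, and it is the $2$-part that acts on the odd part ($\rho_a(s)=n_\alpha n_\gamma n_\delta\,\epsilon_{12}(a)$ inverts $\rho_a(r)=(\alpha+\gamma+\delta)^{\vee}(b)$), not the other way around. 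The nonseparability available in $D_4$ here (Remark~\ref{D4nonsep}) is tied to the element $n_\alpha n_\gamma n_\delta$ interacting with the curve $\epsilon_4(x)\epsilon_{11}(x)$ via the swap $4\leftrightarrow 11$; a purely root-unipotent elementary abelian $A$ normalized by an odd-order torus element with a single cyclic orbit is a different object, and you give no evidence it exists in $D_4$ with the required nonseparability. Without that existence statement your argument does not get off the ground.

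A second, smaller issue is the non-conjugacy step. Arguing that the curve $a\mapsto v_a$ is "transverse to the $C_G(t_0)$-orbit, after shrinking if necessary" at best shows the curve meets each orbit in a finite set near $v_0$; it does not give $\rho_a\not\sim\rho_b$ for \emph{all} $a\neq b$ as the theorem asserts (though it would still yield infinitely many classes, which suffices for a counterexample to Question~\ref{KulshammerQ}). The paper avoids this entirely with a hands-on computation: any conjugating element $g$ must centralize $t$, hence lie in $C_G(t)=TG_{12}$; commutation forces $g\in G_{12}$; and the pairs $(n\epsilon_{12}(a),\epsilon_{12}(1))$ and $(n\epsilon_{12}(b),\epsilon_{12}(1))$ are visibly non-conjugate in $G_{12}\cong A_1$ for $a\neq b$. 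If you want to salvage your route, you need either to produce the pair $(A,t_0)$ explicitly and prove its nonseparability, or to switch to the paper's dihedral configuration, where all the required data are already computed in Section~3.
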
 
We note that nonseparability plays a crucial role in the proof of Theorem~\ref{thmKul}. 
In this paper, the reader will see that seemingly unrelated Questions~\ref{G-cr-M-cr-Q} and~\ref{KulshammerQ} (and the rationality problems for $G$-complete reducibility above and the problem on conjugacy classes below) are related: all our main results concerning these problems (Theorems~\ref{D4example},~\ref{G-cr-M-cr},~\ref{thmKul},~\ref{conjugacy-counterexample}) are based on the same mechanism (nonseparability plus some modifications). However, it is not completely clear yet (at least to the author) how exactly these problems are related. The main purpose of this paper is to give a chance for the reader to look at these problems all in once with a relatively easy example in $G$ of type $D_4$ to stimulate further research on relations between these problems.  

Finally we consider a problem on the number of conjugacy classes. Given $n\in {\mathbb N}$, we let $G$ act on $G^n$ by simultaneous conjugation:
$
g\cdot(g_1, g_2, \ldots, g_n) = (g g_1 g^{-1}, g g_2 g^{-1}, \ldots, g g_n g^{-1}). 
$
In \cite{Slodowy-book}, Slodowy proved the following result, applying Richardson's beautiful tangent space argument~\cite[Sec.~3]{Richardson-Conjugacy-Ann},~\cite[Lem.~3.1]{Richardson-orbits-BullAustralian}. 
\begin{prop}\label{conjugacy}
Let $M$ be a reductive subgroup of a reductive algebraic group $G$ defined over an algebraically closed field $k$. Let $n\in {\mathbb N}$, let $(m_1, \ldots, m_n)\in M^n$ and let $H$ be the subgroup of $M$ generated by $m_1, \ldots, m_n$. Suppose that $(G, M)$ is a reductive pair and that $H$ is separable in $G$. Then the intersection $G\cdot (m_1, \ldots, m_n)\cap M^n$ is a finite union of $M$-conjugacy classes. 
\end{prop}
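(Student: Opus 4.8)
The plan is to carry out the tangent--space argument of Richardson in the form used by Slodowy. Write $\mathbf m=(m_1,\ldots,m_n)$ and let $G$ act on $G^n$ by simultaneous conjugation as above, so that the stabiliser of $\mathbf m$ is $C_G(\mathbf m)=C_G(H)$. The set $Z:=G\cdot\mathbf m\cap M^n$ is a locally closed, $M$-stable subvariety of $M^n$, hence a disjoint union of $M$-orbits, and I want to show that each of these orbits is open in $Z$. Once that is known the proof is finished: a partition of the variety $Z$ into open pieces is a partition into clopen pieces, each a union of connected components of $Z$, and $Z$ has only finitely many connected components. By $M$-homogeneity it suffices to prove that $M\cdot\mathbf m$ itself is open in $Z$; the same argument then applies verbatim at any other point $\mathbf m'\in Z$ once one notes that the hypothesis ``$H$ separable in $G$'' depends only on the $G$-conjugacy class of the subgroup generated by the coordinates (conjugation is an automorphism of $G$), and therefore persists at every point of $Z$.

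First I would pin down $T_{\mathbf m}(G\cdot\mathbf m)$. Identifying $T_{\mathbf m}(G^n)$ with $\mathfrak g^n$ by right translation in each coordinate, the orbit map $\theta\colon G\to G^n$, $g\mapsto(gm_ig^{-1})_i$, has differential at the identity $X\mapsto(X-\textup{Ad}(m_i)X)_i$, whose kernel is $\mathfrak c_{\mathfrak g}(H)$; let $V:=\{(X-\textup{Ad}(m_i)X)_i:X\in\mathfrak g\}$ be its image. One always has $V\subseteq T_{\mathbf m}(G\cdot\mathbf m)$ and $\dim V=\dim\mathfrak g-\dim\mathfrak c_{\mathfrak g}(H)$, whereas $\dim T_{\mathbf m}(G\cdot\mathbf m)=\dim(G\cdot\mathbf m)=\dim G-\dim C_G(H)$ since the orbit is smooth. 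This is exactly where separability of $H$ in $G$ enters: it gives $\dim C_G(H)=\dim\textup{Lie}(C_G(H))=\dim\mathfrak c_{\mathfrak g}(H)$, so the two dimensions agree and $V=T_{\mathbf m}(G\cdot\mathbf m)$. The analogous computation inside $M$ yields $V_M:=\{(X-\textup{Ad}(m_i)X)_i:X\in\mathfrak m\}\subseteq T_{\mathbf m}(M\cdot\mathbf m)$; note that I do not need equality here, so separability of $H$ in $M$ is never invoked.

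Next I would feed in the reductive-pair hypothesis. Choose an $M$-module complement $\mathfrak n$ with $\mathfrak g=\mathfrak m\oplus\mathfrak n$; since $H\leq M$, the subspace $\mathfrak n$ is $\textup{Ad}(m_i)$-stable for every $i$. I claim $V\cap\mathfrak m^n=V_M$ (the inclusion $\supseteq$ being immediate as $\mathfrak m$ is $\textup{Ad}(M)$-stable): for $\subseteq$, given $X=X_{\mathfrak m}+X_{\mathfrak n}$ with $(X-\textup{Ad}(m_i)X)_i\in\mathfrak m^n$, its $\mathfrak n$-component $(X_{\mathfrak n}-\textup{Ad}(m_i)X_{\mathfrak n})_i$ lies in $\mathfrak n^n\cap\mathfrak m^n=0$, forcing $\textup{Ad}(m_i)X_{\mathfrak n}=X_{\mathfrak n}$ for all $i$, hence $(X-\textup{Ad}(m_i)X)_i=(X_{\mathfrak m}-\textup{Ad}(m_i)X_{\mathfrak m})_i\in V_M$. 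Consequently, using $T_{\mathbf m}(M^n)=\mathfrak m^n$, one gets $T_{\mathbf m}Z\subseteq T_{\mathbf m}(G\cdot\mathbf m)\cap T_{\mathbf m}(M^n)=V\cap\mathfrak m^n=V_M\subseteq T_{\mathbf m}(M\cdot\mathbf m)\subseteq T_{\mathbf m}Z$, the last inclusion because $M\cdot\mathbf m\subseteq Z$. So all these spaces coincide. Since $M\cdot\mathbf m$ is smooth, $\dim_{\mathbf m}(M\cdot\mathbf m)=\dim T_{\mathbf m}(M\cdot\mathbf m)=\dim T_{\mathbf m}Z\geq\dim_{\mathbf m}Z\geq\dim_{\mathbf m}(M\cdot\mathbf m)$, so equality holds throughout; in particular $\dim_{\mathbf m}Z=\dim T_{\mathbf m}Z$, i.e.\ $Z$ is smooth at $\mathbf m$, hence locally irreducible there of dimension $\dim(M\cdot\mathbf m)$. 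An irreducible locally closed subset of an irreducible variety of the same dimension is open in it, so $M\cdot\mathbf m$ is open in $Z$ near $\mathbf m$, and then, by translation, open in $Z$.

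I expect the one genuinely delicate point to be this last step: passing from ``the tangent spaces agree at $\mathbf m$'' to ``$M\cdot\mathbf m$ is open in $Z$''. Naively, equality of tangent dimensions does not preclude a spurious extra irreducible component of $Z$ through $\mathbf m$; the clean way to rule this out is precisely the dimension sandwich above, which squeezes the local dimension of $Z$ up to its embedding dimension and thereby forces $\mathcal O_{Z,\mathbf m}$ to be a regular local ring. Beyond that there are only bookkeeping matters: being consistent about identifying tangent spaces via left versus right translations (immaterial for the dimension count), and, if one does not assume $M$ connected, replacing $M$ by $M^\circ$ in the irreducibility statements, which changes orbits only by a finite index and leaves the conclusion unchanged.
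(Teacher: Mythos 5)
Your argument is correct and is precisely the Richardson tangent-space argument that the paper attributes this result to (it cites Slodowy's book and Richardson's papers rather than reproving it): separability forces the image of the differential of the orbit map to fill $T_{\mathbf m}(G\cdot\mathbf m)$, the reductive-pair splitting $\mathfrak g=\mathfrak m\oplus\mathfrak n$ gives $V\cap\mathfrak m^n=V_M$, and the dimension sandwich makes each $M$-orbit open in $G\cdot\mathbf m\cap M^n$. You also correctly handle the two points usually glossed over, namely that separability propagates to every point of the intersection by $G$-conjugacy and that openness of orbits plus finiteness of connected components yields finiteness of the orbit set.
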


Proposition~\ref{conjugacy} has many consequences; see~\cite{Bate-geometric-Inventione}, \cite{Slodowy-book}, and \cite[Sec.~3]{Vinberg-invariants-JLT} for example. Here is our main result on conjugacy classes:

\begin{thm}\label{conjugacy-counterexample}
Let $k$ be of characteristic $2$. Let $G$ be simple of type $D_4$. Let $M$ be the subsystem subgroup of type $A_1 A_1 A_1 A_1$. Then there exists $N\in \mathbb{N}$ and a tuple $\mathbf{m}\in M^N$ such that $G\cdot \bold{m} \cap M^N$ is an infinite union of $M$-conjugacy classes. 
\end{thm}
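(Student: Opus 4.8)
The plan is to exploit the same nonseparable subgroup $H$ of $M$ that drives Theorems~\ref{D4example}, \ref{G-cr-M-cr}, \ref{thmKul}: the point is that Proposition~\ref{conjugacy} has \emph{separability of $H$ in $G$} as a hypothesis, and once we drop it (keeping the reductive pair condition, which we do have since $(G,M)$ with $M$ of type $A_1A_1A_1A_1$ inside $D_4$ in $p=2$ is a reductive pair), Richardson's tangent-space argument breaks exactly at the step where one needs $\mathrm{Lie}(C_G(H))=\mathfrak{c}_{\mathfrak g}(H)$. So I would first fix a nonseparable $H\le M$ of the type already constructed (a finite, or at worst small, subgroup generated by an $N$-tuple $\mathbf m=(m_1,\dots,m_N)\in M^N$), with the additional feature that there is a one-parameter family of elements $g_a\in G$, $a\in k$, such that $g_a\cdot\mathbf m\in M^N$ for all $a$ but the tuples $g_a\cdot\mathbf m$ lie in pairwise distinct $M$-conjugacy classes. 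This is essentially the content already extracted in the proof of Theorem~\ref{thmKul} (there $\mathbf m$ encodes the images of a generating set of $\Gamma_2$, and the non-conjugacy of the restrictions $\rho_a\mid_{\Gamma_2}$ is precisely non-conjugacy of the tuples under $M$), so much of the work is to observe that the construction transfers verbatim.

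Concretely, the key steps, in order, are: (1) recall/recycle the reductive pair $(G,M)$ with $M$ of type $A_1^4$ in $D_4$, $p=2$, verifying $\mathrm{Lie}(M)$ is an $M$-module direct summand of $\mathfrak g$ — this is standard, e.g.\ $\mathfrak g=\mathrm{Lie}(M)\oplus \mathfrak m^\perp$ with respect to a suitable nondegenerate $M$-invariant form, or one cites it from the literature on subsystem subgroups; (2) produce the nonseparable $H$ in $M$ together with the tuple $\mathbf m\in M^N$ generating it, taken directly from the mechanism used for Theorems~\ref{G-cr-M-cr}/\ref{thmKul}, and record that $\dim\mathfrak c_{\mathfrak g}(H) > \dim\mathrm{Lie}(C_G(H))$; (3) exhibit the family $g_a$, $a\in k$, with $g_a\cdot\mathbf m\in M^N$; the natural source is a nontrivial cocharacter/root-group element of $G$ that does \emph{not} normalize $M$ but for which the conjugate of $\mathbf m$ happens to land back in $M$ precisely because the scheme-theoretic centralizer is bigger than the group-theoretic one — i.e.\ the "extra" tangent directions in $\mathfrak c_{\mathfrak g}(H)\setminus \mathrm{Lie}(C_G(H))$ integrate (in $G$, though not inside $C_G(H)$) to honest elements moving $\mathbf m$ within $M^N$; (4) show the resulting tuples are pairwise non-$M$-conjugate: since all $g_a\cdot\mathbf m$ lie in the single $G$-orbit $G\cdot\mathbf m$, distinct $M$-classes among them give distinct points of $G\cdot\mathbf m\cap M^N$, so it suffices to check that $g_a\cdot\mathbf m$ and $g_b\cdot\mathbf m$ are $M$-conjugate only if $a=b$, which one does by a direct coordinate computation in the root groups of $M$ (this is the same non-conjugacy statement already proved for $\rho_a\mid_{\Gamma_2}$ in Theorem~\ref{thmKul}, reinterpreted).

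The main obstacle I expect is step~(3)–(4): constructing the family $g_a$ and proving the conjugates genuinely stay in $M^N$ while remaining pairwise non-conjugate under $M$. The subtlety is that the failure of separability is an infinitesimal (scheme-theoretic) statement, whereas we need an honest $k$-rational family of group elements realizing an infinite union of orbits; bridging that gap is exactly where the earlier exceptional-type constructions were "very intricate," and here one must check by hand, in explicit $D_4$ root-datum coordinates with the $A_1^4$ subsystem, that the relevant one-parameter subgroup of $G$ conjugates the chosen tuple back into $M^N$. Once that explicit family is in hand, finiteness would force an algebraic constraint on $a$ contradicting the construction, so the infinitude of $G\cdot\mathbf m\cap M^N$ as a union of $M$-classes follows; and the whole point is that this cannot contradict Proposition~\ref{conjugacy} precisely because $H$ is \emph{not} separable in $G$ — indeed Theorem~\ref{conjugacy-counterexample} shows the separability hypothesis there is essential. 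I would also remark that, as in the other theorems, the same $D_4$, $p=2$ data underlies the example, reinforcing the paper's thesis that all four phenomena spring from one nonseparability mechanism.
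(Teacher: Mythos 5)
Your overall strategy is the right one and matches the paper's mechanism: the paper also works with the $A_1^4$ subsystem subgroup $M=\langle L_\lambda, G_{12}\rangle$ for $\lambda=(\alpha+2\beta+\gamma+\delta)^{\vee}$, and the infinite family of $M$-classes inside one $G$-orbit is produced by conjugating a fixed tuple by the elements $\epsilon_{4}(\sqrt a)\epsilon_{11}(\sqrt a)\notin M$, exactly the ``integrated'' tangent directions witnessing nonseparability. However, there is a genuine gap at your steps (2) and (4): you never pin down the tuple $\mathbf m$, and the most natural choice suggested by your text --- the two-element tuple $(n_\alpha n_\gamma n_\delta,\,(\alpha+\gamma+\delta)^{\vee}(b))$ from Theorem~\ref{G-cr-M-cr} --- does not work. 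Indeed $\lambda(s)\in G_{12}\le M$ centralizes both $n_\alpha n_\gamma n_\delta$ and $(\alpha+\gamma+\delta)^{\vee}(b)$ and conjugates $\epsilon_{12}(a)$ to $\epsilon_{12}(s^2a)$, so over $\overline k$ the whole family $(n_\alpha n_\gamma n_\delta\epsilon_{12}(a),t)$, $a\neq 0$, collapses into a \emph{single} $M$-class. Relatedly, your appeal to ``the same non-conjugacy statement already proved for $\rho_a\mid_{\Gamma_2}$'' is backwards: in the K\"ulshammer construction the restrictions $\rho_a\mid_{\Gamma_2}$ are pairwise \emph{conjugate} (that is the whole point of the counterexample); what is pairwise non-conjugate are the full triples $(t,\,n\epsilon_{12}(a),\,\epsilon_{12}(1))$, and it is precisely the extra entry $\epsilon_{12}(1)$ that forces any conjugating element into $C_{G_{12}}(\epsilon_{12}(1))$ and kills the torus action above.

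The fix is to enlarge the tuple so that the subgroup it generates contains (a generating set of) $U_{12}$. The paper takes $K:=\langle n,t,U_{12}\rangle$, chooses topological generators $z_1,\dots,z_n$ of $U_{12}$, sets $\mathbf m=(n,t,z_1,\dots,z_n)$, and then argues in two stages: Richardson's tangent-space argument (via the curve tangent to $\mathfrak c_{\mathfrak g}(K)$ but not to $C_G(K)$) gives infinitely many $P_\lambda(M)$-classes in $G\cdot\mathbf m\cap P_\lambda(M)^N$, and a result of Stewart together with the $L_\lambda$-irreducibility of $\langle n,t\rangle$ upgrades this to infinitely many $M$-classes. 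Your more direct route (exhibit the family and check non-conjugacy by hand) can be made to work if you use the K\"ulshammer triple itself --- the pairwise non-$G$-conjugacy proved in Section~6 is stronger than the pairwise non-$M$-conjugacy you need --- but as written the proposal does not identify the correct tuple nor the correct non-conjugacy input, and with the tuple it points to the conclusion fails.
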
 

Here is the structure of the paper. In Section 2, we set out the notation and show some preliminary results. Then in Section 3, we prove our first main result (Theorem~\ref{D4example}) concerning a rationality problem for complete reducibility. In Section 4, we prove some rationality result (Theorem~\ref{centralizerA}) related to the center conjecture. In Section 5, we give a short proof for our second main result on complete reducibility (Theorem~\ref{G-cr-M-cr}) using a recent result from Geometric Invariant Theory (Proposition~\ref{unipotentconjugate}). Then in Section 6, we prove Theorem~\ref{thmKul} giving a new counterexample to the question of K\"ulshammer. Finally in Section 7 we consider a problem on conjugacy classes and prove Theorem~\ref{conjugacy-counterexample}.

\section{Preliminaries}
Throughout, we denote by $k$ a separably closed field. Our references for algebraic groups are~\cite{Borel-AG-book},~\cite{Borel-Tits-Groupes-reductifs},~\cite{Conrad-pred-book},~\cite{Humphreys-book1}, and~\cite{Springer-book}. 

Let $H$ be a (possibly non-connected) affine algebraic group. We write $H^{\circ}$ for the identity component of $H$. We write $[H,H]$ for the derived group of $H$. A reductive group $G$ is called \emph{simple} as an algebraic group if $G$ is connected and all proper normal subgroups of $G$ are finite. We write $X_k(G)$ and $Y_k(G)$ ($X(G)$ and $Y(G)$) for the set of $k$-characters and $k$-cocharacters ($\overline k$-characters and $\overline k$-cocharacters) of $G$ respectively. For $\overline k$-characters and $\overline k$-cocharacters $G$ we simply say characters and cocharacters of $G$. 

Fix a maximal $k$-torus $T$ of $G$ (such a $T$ exists by~\cite[Cor.~18.8]{Borel-AG-book}). Then $T$ splits over $k$ since $k$ is separably closed. Let $\Psi(G,T)$ denote the set of roots of $G$ with respect to $T$. We sometimes write $\Psi(G)$ for $\Psi(G,T)$. Let $\zeta\in\Psi(G)$. We write $U_\zeta$ for the corresponding root subgroup of $G$. We define $G_\zeta := \langle U_\zeta, U_{-\zeta} \rangle$. Let $\zeta, \xi \in \Psi(G)$. Let $\xi^{\vee}$ be the coroot corresponding to $\xi$. Then $\zeta\circ\xi^{\vee}:\overline k^{*}\rightarrow \overline k^{*}$ is a $k$-homomorphism such that $(\zeta\circ\xi^{\vee})(a) = a^n$ for some $n\in\mathbb{Z}$.
Let $s_\xi$ denote the reflection corresponding to $\xi$ in the Weyl group of $G$. Each $s_\xi$ acts on the set of roots $\Psi(G)$ by the following formula~\cite[Lem.~7.1.8]{Springer-book}:
$
s_\xi\cdot\zeta = \zeta - \langle \zeta, \xi^{\vee} \rangle \xi. 
$
\noindent By \cite[Prop.~6.4.2, Lem.~7.2.1]{Carter-simple-book} we can choose $k$-homomorphisms $\epsilon_\zeta : \overline k \rightarrow U_\zeta$  so that 
$
n_\xi \epsilon_\zeta(a) n_\xi^{-1}= \epsilon_{s_\xi\cdot\zeta}(\pm a)
            \text{ where } n_\xi = \epsilon_\xi(1)\epsilon_{-\xi}(-1)\epsilon_{\xi}(1).  \label{n-action on group}
$

The next result~\cite[Prop.~1.12]{Uchiyama-Nonperfect-pre} shows complete reducibility behaves nicely under central isogenies. In this paper we do not specify the isogeny type of $G$. (Our argument works for $G$ of any isogeny type anyway.) Note that if $k$ is algebraically closed, the centrality assumption for $f$ is not necessary in Proposition~\ref{isogeny}. 
\begin{defn}
Let $G_1$ and $G_2$ be reductive $k$-groups. A $k$-isogeny $f:G_1\rightarrow G_2$ is \emph{central} if $\textup{ker}\,df_1$ is central in $\mathfrak{g_1}$ where $\textup{ker}\,df_1$ is the differential of $f$ at the identity of $G_1$ and $\mathfrak{g_1}$ is the Lie algebra of $G_1$. 
\end{defn}
\begin{prop}\label{isogeny}
Let $G_1$ and $G_2$ be reductive $k$-groups. Let $H_1$ and $H_2$ be subgroups of $G_1$ and $G_2$ be subgroups of $G_1$ and $G_2$ respectively. Let $f:G_1 \rightarrow G_2$ be a central $k$-isogeny. 
\begin{enumerate}
\item{If $H_1$ is $G_1$-cr over $k$, then $f(H_1)$ is $G_2$-cr over $k$.}
\item{If $H_2$ is $G_2$-cr over $k$, then $f^{-1}(H_2)$ is $G_1$-cr over $k$.} 
\end{enumerate}
\end{prop}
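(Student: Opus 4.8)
The plan is to reduce the proposition to a dictionary describing how the central $k$-isogeny $f\colon G_1\to G_2$ matches up $k$-defined parabolic and Levi subgroups on the two sides; once that dictionary is in place, both assertions are formal. I would first record the following. Since $\ker f$ is central, it lies in every maximal $k$-torus of $G_1$, hence in every $k$-defined parabolic subgroup and in every Levi subgroup of $G_1$; consequently $f^{-1}(f(X))=X\cdot\ker f=X$ for $X$ any parabolic or Levi subgroup of $G_1$. Combined with standard facts about isogenies, this shows that $P\mapsto f(P)$ is an inclusion-preserving bijection from the $k$-defined parabolic subgroups of $G_1$ onto those of $G_2$, with inverse $Q\mapsto f^{-1}(Q)$, carrying proper parabolics to proper parabolics. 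For a matched pair $P_1$, $P_2=f(P_1)$, the restriction $f|_{P_1}\colon P_1\to P_2$ is again a central $k$-isogeny, and its restriction $R_u(P_1)\to R_u(P_2)$ is an \emph{isomorphism}: the kernel is of multiplicative type, hence meets the unipotent group $R_u(P_1)$ trivially, so the map is a closed immersion of smooth connected groups of equal dimension and therefore an isomorphism; in particular it is bijective on $k$-points.

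Next I would show that $f$ also matches up $k$-defined Levi subgroups. If $L_1$ is a Levi subgroup of $P_1$, then $f(L_1)$ is connected reductive (an isogenous image of $L_1$), lies in $P_2$, and satisfies $f(L_1)\cdot R_u(P_2)=f\bigl(L_1\cdot R_u(P_1)\bigr)=P_2$; the intersection $f(L_1)\cap R_u(P_2)$, being the kernel of the surjection $f(L_1)\to P_2/R_u(P_2)$ of groups of equal dimension, is finite, and it is normal --- hence central --- in the connected reductive group $f(L_1)$ and consists of unipotent elements, so it is trivial because the center of a connected reductive group contains no nontrivial unipotent element. Thus $f(L_1)$ is a Levi subgroup of $P_2$, $k$-defined whenever $L_1$ is. Conversely, given a $k$-defined Levi $L_2$ of $P_2$, I would fix any $k$-defined Levi $L_1^0$ of $P_1$, write $f(L_1^0)=u_2 L_2 u_2^{-1}$ for some $u_2\in R_u(P_2)(k)$ (the $k$-defined Levi subgroups of a $k$-defined parabolic form a single $R_u(P)(k)$-orbit), lift $u_2=f(u_1)$ with $u_1\in R_u(P_1)(k)$, and set $L_1:=u_1^{-1}L_1^0 u_1$; then $L_1$ is a $k$-defined Levi of $P_1$ with $f(L_1)=L_2$, and $f^{-1}(L_2)=f^{-1}(f(L_1))=L_1$.

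Granting this dictionary, I would deduce the two parts as follows. For (1): if $f(H_1)$ lies in a $k$-defined parabolic $P_2$ of $G_2$, then $H_1\subseteq f^{-1}(f(H_1))\subseteq f^{-1}(P_2)=:P_1$, a $k$-defined parabolic of $G_1$, so since $H_1$ is $G_1$-cr over $k$ there is a $k$-defined Levi $L_1$ of $P_1$ with $H_1\subseteq L_1$; then $f(L_1)$ is a $k$-defined Levi of $P_2$ containing $f(H_1)$, proving $f(H_1)$ is $G_2$-cr over $k$. For (2): put $\widetilde H_1:=f^{-1}(H_2)$, a $k$-defined subgroup of $G_1$; if $\widetilde H_1$ lies in a $k$-defined parabolic $P_1$ of $G_1$, then $H_2=f(\widetilde H_1)$ lies in the $k$-defined parabolic $P_2:=f(P_1)$, so since $H_2$ is $G_2$-cr over $k$ there is a $k$-defined Levi $L_2$ of $P_2$ with $H_2\subseteq L_2$; choosing a $k$-defined Levi $L_1$ of $P_1$ with $f(L_1)=L_2$ as above, we get $\widetilde H_1=f^{-1}(H_2)\subseteq f^{-1}(L_2)=L_1$, proving $f^{-1}(H_2)$ is $G_1$-cr over $k$.

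The real content lies in the dictionary, and the main obstacle there is the rationality bookkeeping: checking that images and preimages under the $k$-morphism $f$ stay $k$-defined, that the restricted map $R_u(P_1)\to R_u(P_2)$ is a genuine isomorphism so that $k$-points of the unipotent radicals lift (note that $f$ itself need not be surjective on $k$-points when it is inseparable), and the one honestly geometric point, that $f(L_1)$ is a bona fide Levi subgroup and not merely a reductive subgroup of the correct dimension. Centrality of $\ker f$ is used, and is essential, in two places: to place $\ker f$ inside every parabolic and every Levi subgroup --- this is what makes $f^{-1}(f(X))=X$ and hence all the bijections work --- and to keep the restricted isogenies $P_1\to P_2$ central, so that $f(L_1)$ stays reductive. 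For a non-central isogeny (e.g.\ an exceptional isogeny in characteristic $2$) the identity $f^{-1}(f(L_1))=L_1$ can fail, which is exactly why the hypothesis is imposed; when $k$ is algebraically closed the $k$-rationality clauses are automatic, recovering the unconditional statement mentioned before the proposition.
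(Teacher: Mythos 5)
The paper does not actually prove Proposition~\ref{isogeny}: it is imported verbatim from [Prop.~1.12] of the cited preprint \cite{Uchiyama-Nonperfect-pre}, so there is no in-paper proof to compare against. Your argument is correct and is the standard one for this statement (and the one the cited source is built on): centrality puts $\ker f$ inside every maximal torus, giving inclusion-preserving bijections $P\mapsto f(P)$, $L\mapsto f(L)$ between the $k$-defined parabolic and Levi subgroups of $G_1$ and $G_2$, with the isomorphism $R_u(P_1)\cong R_u(P_2)$ supplying the lift of $R_u(P_2)(k)$ needed to match $k$-defined Levis; both parts then follow formally, exactly as you say.
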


The next result~\cite[Thm.~1.4]{Bate-cocharacterbuildings-Arx} is used repeatedly to reduce problems on $G$-complete reducibility to those on $L$-complete reducibility where $L$ is a Levi subgroup of $G$. 

\begin{prop}\label{G-cr-L-cr}
Suppose that a subgroup $H$ of $G$ is contained in a $k$-defined Levi subgroup of $G$. Then $H$ is $G$-cr over $k$ if and only if it is $L$-cr over $k$. 
\end{prop}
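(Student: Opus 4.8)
The plan is to phrase both implications in terms of cocharacters and $R$-parabolic subgroups, which is legitimate here because $k$ is separably closed: every maximal $k$-torus of $G$ is $k$-split, every proper $k$-defined parabolic subgroup of $G$ is of the form $P_\lambda$ for some $\lambda\in Y_k(G)$, and the $k$-defined Levi subgroups of $P_\lambda$ are exactly the conjugates $uL_\lambda u^{-1}$ with $u\in R_u(P_\lambda)(k)$, where $L_\lambda=C_G(\lambda)$. Thus $H$ is $G$-cr over $k$ precisely when, for every $\lambda\in Y_k(G)$ with $H\subseteq P_\lambda$, there is $u\in R_u(P_\lambda)(k)$ with $uHu^{-1}\subseteq L_\lambda$; similarly for $L$-cr over $k$. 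I fix a $k$-cocharacter $\sigma$ with $C_G(\sigma)=L$ (possible since $Z(L)^\circ$ is $k$-split) and record that for $\mu\in Y_k(L)\subseteq Y_k(G)$ one has $\mu$ commuting with $\sigma$, $P_\mu(L)=P_\mu\cap L$, $L_\mu(L)=L_\mu\cap L$ and $R_u(P_\mu(L))=R_u(P_\mu)\cap L$.

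For the implication ``$G$-cr over $k$ $\Rightarrow$ $L$-cr over $k$'' I would argue directly. Let $\mu\in Y_k(L)$ with $H\subseteq P_\mu(L)$. The device is to inflate by a large multiple of $\sigma$: for $N\gg0$ a weight computation gives $P_{\mu+N\sigma}=R_u(P_\sigma)\rtimes P_\mu(L)$, $L_{\mu+N\sigma}=L_\mu(L)\subseteq C_G(\sigma)$ and hence $R_u(P_{\mu+N\sigma})=R_u(P_\sigma)\rtimes R_u(P_\mu(L))$. Since $H\subseteq P_\mu(L)\subseteq P_{\mu+N\sigma}$ and $H$ is $G$-cr over $k$, there is $u\in R_u(P_{\mu+N\sigma})(k)$ with $uHu^{-1}\subseteq L_{\mu+N\sigma}=L_\mu(L)$. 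Write $u=u_+u_0$ with $u_+\in R_u(P_\sigma)(k)$ and $u_0\in R_u(P_\mu(L))(k)$. Now $uHu^{-1}\subseteq L_\mu(L)\subseteq C_G(\sigma)$ is centralised by $\sigma$, and so is $H$; applying $\lim_{a\to0}\sigma(a)(\,\cdot\,)\sigma(a)^{-1}$ to $uHu^{-1}$ and using $\lim_{a\to0}\sigma(a)u\sigma(a)^{-1}=u_0$ yields $u_0Hu_0^{-1}=uHu^{-1}\subseteq L_\mu(L)$. As $u_0\in R_u(P_\mu(L))(k)$, this places $H$ in a $k$-Levi of $P_\mu(L)$, proving $L$-cr over $k$.

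For the reverse implication ``$L$-cr over $k$ $\Rightarrow$ $G$-cr over $k$'' I would first reduce to the case that $H$ is $L$-ir over $k$, by replacing $L$ with the minimal $k$-Levi subgroup of $L$ containing $H$; this is again a $k$-Levi of $G$, and $H$ is $L$-ir over $k$ in it by the standard minimal-Levi argument (which uses only inflation of parabolics inside $L$, so is not circular). Now let $\lambda\in Y_k(G)$ with $H\subseteq P_\lambda$; I must produce $u\in R_u(P_\lambda)(k)$ with $uHu^{-1}\subseteq L_\lambda$. The subgroup $R_u(P_\lambda)\cap L$ is unipotent and normalised by $P_\lambda\cap L\supseteq H$, so if it were nontrivial the Borel--Tits theorem~\cite{Borel-Tits-Groupes-reductifs} would place $H$ inside a proper $k$-parabolic subgroup of $L$, contradicting $L$-irreducibility; hence $R_u(P_\lambda)\cap L=1$. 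The remaining task is to promote this ``reductive meeting'' of $P_\lambda$ and $L$ to an actual rational conjugation of $H$ into $L_\lambda$.

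I expect this last step to be the main obstacle. Concretely, writing $S=Z(L)^\circ$ (so $L=C_G(S)$ and $S$ centralises $H$), one wants $S\subseteq P_\lambda$, equivalently $L$ contained in a Levi subgroup of $P_\lambda$, after which $L$-irreducibility forces $H\subseteq L\subseteq L_\lambda$ up to an element of $R_u(P_\lambda)(k)$. To get there I would study the action of $S$ on the projective variety $G/P_\lambda$: the $H$-fixed locus is a nonempty $S$-stable closed subvariety, so the torus $S$ fixes a point of it, producing a $k$-parabolic containing both $H$ and $S$; the difficulty is to transport this back to $P_\lambda$ itself and to do so over $k$. This is precisely where the relative-position theory of parabolics and Levis~\cite{Borel-Tits-Groupes-reductifs} and the cocharacter/building machinery behind~\cite[Thm.~1.4]{Bate-cocharacterbuildings-Arx} enter; separable closedness of $k$ (making $R_u(P_\lambda)(k)$ dense and all weight decompositions $k$-defined) together with Proposition~\ref{paris} then guarantee the resulting conjugation is rational. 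The two implications are genuinely asymmetric, and the content of the proposition sits almost entirely in this reductive-meeting step of the reverse direction.
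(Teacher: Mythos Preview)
The paper does not prove this proposition: it is quoted from \cite[Thm.~1.4]{Bate-cocharacterbuildings-Arx} and used as a black box, so there is no in-paper argument to compare your attempt against.

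On your attempt itself: the forward implication via the inflation $\mu\mapsto\mu+N\sigma$ is correct and is the standard device. The reverse implication is, as you explicitly concede, incomplete. Reducing to $H$ being $L$-irreducible over $k$ is fine, and (granting the applicability of Borel--Tits over $k=k_s$ to the $k$-defined unipotent group $R_u(P_\lambda)\cap L$) the conclusion $R_u(P_\lambda)\cap L=1$ is also fine. But from there your torus-fixed-point argument on $G/P_\lambda$ only produces some $G(k)$-conjugate of $P_\lambda$ containing both $H$ and $S=Z(L)^\circ$; to finish you must show that $S$ already lies in $P_\lambda$, equivalently that the conjugating element can be taken in $R_u(P_\lambda)(k)$, and that is precisely the statement at issue. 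Deferring this step to ``the cocharacter/building machinery behind~\cite[Thm.~1.4]{Bate-cocharacterbuildings-Arx}'' is circular, since that theorem \emph{is} the proposition under discussion, and the appeal to Proposition~\ref{paris} is vacuous here because $k=k_s$ by the standing hypothesis. So the reverse direction has a genuine gap which your sketch correctly locates but does not close.
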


We recall characterizations of parabolic subgroups, Levi subgroups, and unipotent radicals in terms of cocharacters of $G$~\cite[Prop.~8.4.5]{Springer-book}. These characterizations are essential to translate results on complete reducibility into the language of GIT; see~\cite{Bate-geometric-Inventione},~\cite{Bate-uniform-TransAMS} for example. 

\begin{defn}
Let $X$ be a affine $k$-variety. Let $\phi : \overline k^*\rightarrow X$ be a $k$-morphism of affine $k$-varieties. We say that $\displaystyle\lim_{a\rightarrow 0}\phi(a)$ exists if there exists a $k$-morphism $\hat\phi:\overline k\rightarrow X$ (necessarily unique) whose restriction to $\overline k^{*}$ is $\phi$. If this limit exists, we set $\displaystyle\lim_{a\rightarrow 0}\phi(a) = \hat\phi(0)$.
\end{defn}

\begin{defn}\label{R-parabolic}
Let $\lambda\in Y(G)$. Define
$
P_\lambda := \{ g\in G \mid \displaystyle\lim_{a\rightarrow 0}\lambda(a)g\lambda(a)^{-1} \text{ exists}\}, $\\
$L_\lambda := \{ g\in G \mid \displaystyle\lim_{a\rightarrow 0}\lambda(a)g\lambda(a)^{-1} = g\}, \,
R_u(P_\lambda) := \{ g\in G \mid  \displaystyle\lim_{a\rightarrow0}\lambda(a)g\lambda(a)^{-1} = 1\}. 
$
\end{defn}
Then $P_\lambda$ is a parabolic subgroup of $G$, $L_\lambda$ is a Levi subgroup of $P_\lambda$, and $R_u(P_\lambda)$ is the unipotent radical of $P_\lambda$. If $\lambda$ is $k$-defined, $P_\lambda$, $L_\lambda$, and $R_u(P_\lambda)$ are $k$-defined~\cite[Sec.~2.1-2.3]{Richardson-conjugacy-Duke}. Any $k$-defined parabolic subgroups and $k$-defined Levi subgroups of $G$ arise in this way since $k$ is separably closed. It is well known that $L_\lambda = C_G(\lambda(\overline k^*))$. Note that $k$-defined Levi subgroups of a $k$-defined parabolic subgroup $P$ of $G$ are $R_u(P)(k)$-conjugate~\cite[Lem.~2.5(\rmnum{3})]{Bate-uniform-TransAMS}. Let $M$ be a reductive $k$-subgroup of $G$. Then, there is a natural inclusion $Y_k(M)\subseteq Y_k(G)$ of $k$-cocharacter groups. Let $\lambda\in Y_k(M)$. We write $P_\lambda(G)$ or just $P_\lambda$ for the parabolic subgroup of $G$ corresponding to $\lambda$, and $P_\lambda(M)$ for the parabolic subgroup of $M$ corresponding to $\lambda$. It is clear that $P_\lambda(M) = P_\lambda(G)\cap M$ and $R_u(P_\lambda(M)) = R_u(P_\lambda(G))\cap M$. 

Recall the following geometric characterization for complete reducibility via GIT~\cite{Bate-geometric-Inventione}. Suppose that a subgroup $H$ of $G$ is generated by $n$-tuple ${\bf h}=(h_1,\cdots, h_n)$ of $G$, and $G$ acts on ${\bf h}$ by simultaneous conjugation. 
\begin{prop}\label{geometric}
A subgroup $H$ of $G$ is $G$-cr if and only if the $G$-orbit $G\cdot {\bf h}$ is closed. 
\end{prop}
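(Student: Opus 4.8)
The plan is to set up the GIT dictionary from Definition~\ref{R-parabolic} and then run the Hilbert--Mumford--Kempf machinery for the reductive group $G$ acting on the affine variety $G^n$ by simultaneous conjugation. The basic dictionary is: for $\lambda\in Y(G)$ the limit $\lim_{a\to 0}\lambda(a)\cdot\mathbf{h}$ exists if and only if each $h_i\in P_\lambda$, that is, $H\le P_\lambda$; and when it exists the map $c_\lambda\colon P_\lambda\to L_\lambda$, $c_\lambda(g)=\lim_{a\to 0}\lambda(a)g\lambda(a)^{-1}$, is a surjective homomorphism with kernel $R_u(P_\lambda)$ restricting to the identity on $L_\lambda$, so the limit is the tuple $\mathbf{h}'$ generating $c_\lambda(H)\le L_\lambda$. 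I will also use two standard GIT inputs: every parabolic subgroup of $G$ is $P_\lambda$ for some $\lambda$, and (Hilbert--Mumford--Kempf) if $G\cdot\mathbf{h}$ is not closed then some $\lambda\in Y(G)$ satisfies that $\mathbf{h}'=\lim_{a\to 0}\lambda(a)\cdot\mathbf{h}$ lies in the unique closed orbit of $\overline{G\cdot\mathbf{h}}$, so that $\mathbf{h}'\notin G\cdot\mathbf{h}$.

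For the ``only if'' direction I argue contrapositively. If $G\cdot\mathbf{h}$ is not closed, choose a destabilizing $\lambda$ as above; then $H\le P_\lambda$ while $\mathbf{h}'\notin G\cdot\mathbf{h}$. Were $H$ contained in a Levi subgroup $uL_\lambda u^{-1}$ ($u\in R_u(P_\lambda)$) of $P_\lambda$, then $u^{-1}\cdot\mathbf{h}$ generates $u^{-1}Hu\le L_\lambda$ and hence is fixed by $c_\lambda$; but $c_\lambda(u^{-1}\cdot\mathbf{h})=\mathbf{h}'$ because $c_\lambda(u)=1$, forcing $\mathbf{h}'=u^{-1}\cdot\mathbf{h}\in G\cdot\mathbf{h}$, a contradiction. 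So $H$ lies in the parabolic $P_\lambda$ but in no Levi subgroup of it, i.e.~$H$ is not $G$-cr.

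For the ``if'' direction, suppose $G\cdot\mathbf{h}$ is closed and let $P$ be any parabolic containing $H$; write $P=P_\lambda$. Since $H\le P_\lambda$ the limit $\mathbf{h}'=c_\lambda(\mathbf{h})$ exists and lies in $\overline{G\cdot\mathbf{h}}=G\cdot\mathbf{h}$, so $\mathbf{h}$ and $\mathbf{h}'$ are $G$-conjugate, say $\mathbf{h}=g\cdot\mathbf{h}'$. The crucial point is to arrange $g\in R_u(P_\lambda)$: granting $\mathbf{h}=u\cdot\mathbf{h}'$ with $u\in R_u(P_\lambda)$, we get $H=u\,c_\lambda(H)\,u^{-1}\le uL_\lambda u^{-1}$, a Levi subgroup of $P$, and since $P$ was arbitrary $H$ is $G$-cr.

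The main obstacle is exactly this conjugacy-upgrade lemma: if $\mathbf{h}$ and its $\lambda$-limit $\mathbf{h}'=c_\lambda(\mathbf{h})$ are $G$-conjugate, then they are $R_u(P_\lambda)$-conjugate. A natural entry point is to set $\mu:=g\lambda g^{-1}$, where $\mathbf{h}=g\cdot\mathbf{h}'$; since $\lambda$ fixes $\mathbf{h}'$, $\mu$ fixes $\mathbf{h}$, so $H\le L_\mu=gL_\lambda g^{-1}$, a Levi subgroup of the conjugate parabolic $P_\mu=gP_\lambda g^{-1}$. One then has $H\le P_\lambda\cap L_\mu$ with $P_\lambda$ and $P_\mu$ conjugate, and the task is to slide $H$ into a Levi of $P_\lambda$ by an element of $R_u(P_\lambda)$. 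I expect to close this using that orbits of the unipotent group $R_u(P_\lambda)$ on the affine variety $G^n$ are closed, together with a comparison of $\dim C_{R_u(P_\lambda)}(\mathbf{h})$ via the $\lambda$-weight grading on $\mathfrak g$. Making this weight and dimension bookkeeping precise is the technical heart of the argument.
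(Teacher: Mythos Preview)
The paper does not prove this proposition at all: it is simply quoted from \cite{Bate-geometric-Inventione} as a known result, so there is no ``paper's own proof'' to compare with. What you have written is an outline of one standard route to this theorem; let me comment on that.

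Your ``only if'' direction is fine. In the ``if'' direction you have correctly isolated the crux: from $G\cdot\mathbf h$ closed and $\mathbf h'=c_\lambda(\mathbf h)\in G\cdot\mathbf h$ you must upgrade $G$-conjugacy of $\mathbf h$ and $\mathbf h'$ to $R_u(P_\lambda)$-conjugacy. This is exactly the GIT lemma the paper cites immediately afterwards as \cite[Thm.~3.3]{Bate-uniform-TransAMS} (packaged here as Proposition~\ref{unipotentconjugate}); it is genuinely nontrivial. Your proposed attack, however, does not close. Knowing that $R_u(P_\lambda)$-orbits on $G^n$ are closed would finish things \emph{if} you knew $\mathbf h'\in\overline{R_u(P_\lambda)\cdot\mathbf h}$, but you only know $\mathbf h'=\lim_{t\to 0}\lambda(t)\cdot\mathbf h$, and $\lambda(t)\cdot\mathbf h$ need not lie in $R_u(P_\lambda)\cdot\mathbf h$: writing $h_i=u_i\ell_i$ with $u_i\in R_u(P_\lambda)$, $\ell_i\in L_\lambda$, one has $\lambda(t)\cdot h_i=(\lambda(t)u_i\lambda(t)^{-1})\ell_i$, and there is no reason a \emph{single} $u(t)\in R_u(P_\lambda)$ conjugates every $h_i$ to $(\lambda(t)u_i\lambda(t)^{-1})\ell_i$ simultaneously. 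Likewise, the $\mu=g\lambda g^{-1}$ manoeuvre only shows $H\le L_\mu$, a Levi of the \emph{conjugate} parabolic $P_\mu$, and moving from there into a Levi of $P_\lambda$ is precisely the difficulty you started with.

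For orientation: the proof in \cite{Bate-geometric-Inventione} does not go this way. It passes through Richardson's notion of \emph{strong reductivity}: one shows directly that $H$ is $G$-cr if and only if $H$ is strongly reductive in $G$ (i.e.\ $H$ is not contained in any proper parabolic of $C_G(S)^{\circ}$ for $S$ a maximal torus of $C_G(H)$), and then invokes Richardson's theorem that strong reductivity is equivalent to closedness of $G\cdot\mathbf h$. The direct Hilbert--Mumford route you sketch is also valid, but the ``conjugacy-upgrade'' step requires the Kempf--Hesselink--Rousseau machinery of optimal destabilising cocharacters, as in \cite{Bate-uniform-TransAMS}; a bare dimension count does not suffice. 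As written, your proposal has a real gap at that step.
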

Combining Proposition~\ref{geometric} and a recent result from GIT~\cite[Thm.~3.3]{Bate-uniform-TransAMS} we have
\begin{prop}\label{unipotentconjugate}
Let $H$ be a subgroup of $G$. Let $\lambda\in Y(G)$. Suppose that ${\bf h'}:=\lim_{a\rightarrow 0}\lambda(a)\cdot {\bf h}$ exists. If $H$ is $G$-cr, then ${\bf h'}$ is $R_u(P_\lambda)$-conjugate to ${\bf h}$. 
\end{prop}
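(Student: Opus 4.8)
The plan is to combine the orbit criterion of Proposition~\ref{geometric} with a Bia\l ynicki--Birula analysis of the contracting $\lambda$-action on the (now closed) orbit of $\mathbf h$. First I would record what the existence of the limit buys us: by Definition~\ref{R-parabolic}, the existence of $\lim_{a\to 0}\lambda(a)h_i\lambda(a)^{-1}$ for each coordinate means $h_i\in P_\lambda$, so $H\le P_\lambda$, and writing $c_\lambda\colon P_\lambda\to L_\lambda$ for the canonical projection with kernel $R_u(P_\lambda)$ we get $\mathbf h' = c_\lambda(\mathbf h)\in L_\lambda^{\,n}$. Next, since $H$ is $G$-cr, Proposition~\ref{geometric} gives that the orbit $\Omega := G\cdot\mathbf h$ is closed. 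The tuple $\mathbf h'$ is a limit of points $\lambda(a)\cdot\mathbf h\in\Omega$, hence $\mathbf h'\in\overline{\Omega} = \Omega$; so at the very least $\mathbf h'$ is $G$-conjugate to $\mathbf h$ and $\Omega = G\cdot\mathbf h'$ is a single closed affine orbit.

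The substance of the statement is the refinement from $G$-conjugacy to $R_u(P_\lambda)$-conjugacy, and this is where I would do the real work. Identify $\Omega\cong G/G_{\mathbf h'}$. Because $\mathbf h'$ is fixed by all of $\lambda(\overline k^*)$ we have $\lambda(\overline k^*)\le G_{\mathbf h'}$, so $\mathbf h'$ is a fixed point of the $\lambda$-flow on $\Omega$ and $\mathbf h$ lies in its attracting set $A := \{x\in\Omega : \lim_{a\to 0}\lambda(a)\cdot x = \mathbf h'\}$. On one hand, a direct contraction computation (for $u\in R_u(P_\lambda)$ one has $\lambda(a)u\lambda(a)^{-1}\to 1$ while $\lambda(a)$ fixes $\mathbf h'$) shows $R_u(P_\lambda)\cdot\mathbf h'\subseteq A$, and by the Kostant--Rosenlicht theorem this unipotent orbit is closed in $\Omega$. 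The plan is then to prove the reverse containment $A = R_u(P_\lambda)\cdot\mathbf h'$ by a Bia\l ynicki--Birula argument: the attracting cell of the fixed point $\mathbf h'$ is irreducible of dimension equal to the number of positive $\lambda$-weights on $T_{\mathbf h'}\Omega = \mathfrak g/\mathfrak g_{\mathbf h'}$, which equals $\dim R_u(P_\lambda) - \dim(R_u(P_\lambda)\cap G_{\mathbf h'}) = \dim R_u(P_\lambda)\cdot\mathbf h'$; since $R_u(P_\lambda)\cdot\mathbf h'$ is a closed irreducible subset of the irreducible cell $A$ of the same dimension, equality follows. Feeding $\mathbf h\in A$ into this identification yields $u\in R_u(P_\lambda)$ with $u\cdot\mathbf h' = \mathbf h$, as required; this is precisely the content packaged in the cited GIT input~\cite[Thm.~3.3]{Bate-uniform-TransAMS}, which in the final write-up I would simply invoke once $\Omega$ has been shown to be closed.

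The hard part will be the attracting-set identification $A = R_u(P_\lambda)\cdot\mathbf h'$, i.e. arranging the conjugating element to be chosen inside $R_u(P_\lambda)$ rather than merely in $G$. The subtlety is that on the quotient $G/G_{\mathbf h'}$ a coset $x\,G_{\mathbf h'}$ can be attracted to the base point even when no representative of it lies in $P_\lambda$, so one cannot argue naively by lifting the limit from $\Omega$ back to $G$. To control this I would exploit that $\lambda\in Y(G_{\mathbf h'})$, so that the parabolic and unipotent radical of $G_{\mathbf h'}$ associated to $\lambda$ are compatible with those of $G$ (namely $P_\lambda\cap G_{\mathbf h'}$ and $R_u(P_\lambda)\cap G_{\mathbf h'}$), and combine this with the closedness of both $\Omega$ and $R_u(P_\lambda)\cdot\mathbf h'$ to run the dimension and irreducibility comparison above. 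Isolating and proving this containment is the only nonformal step; everything else is the bookkeeping of the two displayed reductions.
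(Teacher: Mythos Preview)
Your proposal is correct and matches the paper's treatment exactly: the paper gives no independent proof but simply records the proposition as the combination of Proposition~\ref{geometric} (closedness of $G\cdot\mathbf h$) with \cite[Thm.~3.3]{Bate-uniform-TransAMS}, which is precisely what you plan to invoke in the final write-up. Your supplementary Bia\l ynicki--Birula sketch of the cited result is a reasonable heuristic, though the identification $T_{\mathbf h'}\Omega \cong \mathfrak g/\mathfrak g_{\mathbf h'}$ and the ensuing dimension count can fail in positive characteristic when the scheme-theoretic stabiliser of $\mathbf h'$ is non-smooth (exactly the nonseparable situation central to this paper), so deferring to the citation is indeed the right call.
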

%We also use a rational version of Proposition~\ref{unipotentconjugate}; see~\cite[Cor.~5.1]{Bate-cocharacter-Arx},~\cite[Thm.~9.3]{Bate-cocharacter-Arx}:

%\begin{prop}\label{rationalonjugacy}
%Let $H$ be a subgroup of $G$. Let $\lambda\in Y_k(G)$. Suppose that ${\bf h'}:=\lim_{a\rightarrow 0}\lambda(a)\cdot {\bf h}$ exists. If $H$ is $G$-cr over $k$, then ${\bf h'}$ is $R_u(P_\lambda)(k)$-conjugate to ${\bf h}$. 
%\end{prop}   

\section{$G$-cr vs $G$-cr over $k$ (Proof of Theorem~\ref{D4example})}
Let $G$ be a simple algebraic group of type $D_4$ defined over a nonperfect field $k$ of characteristic $2$. Fix a maximal $k$-torus of $G$ and a $k$-defined Borel subgroup of $G$. let $\Psi(G)=\Psi(G,T)$ be the set of roots corresponding to $T$, and $\Psi(G)^{+}=\Psi(G,B,T)$ be the set of positive roots of $G$ corresponding to $T$ and $B$. The following Dynkin diagram defines the set of simple roots of $G$.
\begin{figure}[h]
                \centering
                \scalebox{0.7}{
                \begin{tikzpicture}
                \draw (1,0) to (2,0);
                \draw (2,0) to (2,0.85);
                \draw (2,0) to (3,0);
                \fill (1,0) circle (1mm);
                \fill (2,0) circle (1mm);
                \fill (2,0.85) circle (1mm);
                \fill (3,0) circle (1mm);
                \draw[below] (1,-0.2) node{$\alpha$};
                \draw[below] (2,-0.2) node{$\beta$};
                \draw[below] (2.4,1) node{$\gamma$};
                \draw[below] (3,-0.2) node{$\delta$};
               \end{tikzpicture}}
\end{figure}

We label $\Psi(G)^{+}$ in the following. The corresponding negative roos are defined accordingly. Note that Roots 1, 2, 3, 4 correspond to $\alpha$, $\gamma$, $\delta$, $\beta$ respectively.
\begin{table}[!h]
\begin{center}
\scalebox{0.7}{
\begin{tabular}{cccccc}
\rootsD{1}{0}{1}{0}{0}&\rootsD{2}{1}{0}{0}{0}&\rootsD{3}{0}{0}{0}{1}&\rootsD{4}{0}{0}{1}{0}&\rootsD{5}{0}{1}{1}{0}&\rootsD{6}{1}{0}{1}{0}\\
\rootsD{7}{0}{0}{1}{1}&\rootsD{8}{1}{1}{1}{0}&\rootsD{9}{0}{1}{1}{1}&\rootsD{10}{1}{0}{1}{1}&\rootsD{11}{1}{1}{1}{1}&\rootsD{12}{1}{1}{2}{1}\\
\end{tabular}
}
\end{center}
\end{table}   
Let
$\lambda:=(\alpha+2\beta+\gamma+\delta)^{\vee}=\alpha^{\vee}+2\beta^{\vee}+\gamma^{\vee}+\delta^{\vee}$. 
Then 
$
P_\lambda=\langle T, U_{\zeta}\mid \zeta\in \Psi(G)^{+}\cup \{-1,-2,-3\} \rangle,
L_\lambda=\langle T, U_{\zeta}\mid \zeta\in \{\pm 1,\pm 2,\pm 3\} \rangle,
R_u(P_\lambda)=\langle U_{\zeta} \mid \zeta \in \Psi(G)^{+}\backslash \{1, 2, 3\} \rangle.
$
Let $a\in k\backslash k^2$. Pick $b\in k^{*}$ with $b^3=1$ and $b\neq 1$. Let $v(\sqrt a):=\epsilon_{4}(\sqrt a)\epsilon_{11}(\sqrt a)\in R_u(P_\lambda)(\overline k)$. Define
\begin{equation*}
H:=v(\sqrt a)\cdot\langle n_\alpha n_\gamma n_\delta, \; (\alpha+\gamma+\delta)^{\vee}(b) \rangle.
\end{equation*}
Here is our first main result in this section.
\begin{prop}\label{firstmain}
$H$ is $k$-defined. Moreover, $H$ is $G$-cr but not $G$-cr over $k$. 
\end{prop}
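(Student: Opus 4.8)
The plan is to establish the three assertions — $k$-definedness, $G$-complete reducibility, and the failure of $G$-complete reducibility over $k$ — in that order, with essentially everything reduced to two root-combinatorial facts about $\lambda$. Write $w=s_\alpha s_\gamma s_\delta$, $n=n_\alpha n_\gamma n_\delta$, $c=(\alpha+\gamma+\delta)^\vee(b)$, and abbreviate the roots $\beta$, $\alpha+\beta+\gamma+\delta$, $\alpha+2\beta+\gamma+\delta$ by their labels $4,11,12$. First I would record: (i) $w$ interchanges the roots $4,11$ and fixes $12$, so conjugation by $n$ interchanges $U_4,U_{11}$ and normalises $U_{12}$, while $[\epsilon_4(s),\epsilon_{11}(t)]=\epsilon_{12}(\pm st)$ with no other root-sums occurring among $\{4,11,12\}$; (ii) among the roots of $R_u(P_\lambda)$, exactly $4,11,12$ pair trivially mod $3$ with $(\alpha+\gamma+\delta)^\vee$, so $c$ centralises $U_4,U_{11},U_{12}$ and scales every other root group of $R_u(P_\lambda)$ by a nontrivial power of $b$. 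I would also note $n^2=1$, $c$ has order $3$, and $ncn^{-1}=c^{-1}$ (by the coroot formula and $p=2$), so $H_0:=\langle n,c\rangle\cong S_3$ and $H=v(\sqrt a)\,H_0\,v(\sqrt a)^{-1}$.

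For $k$-definedness I would conjugate the generators of $H_0$ by $v(\sqrt a)$ explicitly. By (ii), $c$ commutes with $v(\sqrt a)=\epsilon_4(\sqrt a)\epsilon_{11}(\sqrt a)$, so $v(\sqrt a)\,c\,v(\sqrt a)^{-1}=c$. By (i), $v(\sqrt a)^{-1}=\epsilon_{11}(\sqrt a)\epsilon_4(\sqrt a)$ and $n\,v(\sqrt a)^{-1}=v(\sqrt a)\,n$, whence $v(\sqrt a)\,n\,v(\sqrt a)^{-1}=v(\sqrt a)^2 n$; a two-line Chevalley-commutator computation — the cross term being $\epsilon_{12}(\sqrt a\cdot\sqrt a)=\epsilon_{12}(a)$, and the two copies of $\epsilon_4(\sqrt a)$ and the two of $\epsilon_{11}(\sqrt a)$ cancelling in characteristic $2$ — gives $v(\sqrt a)^2=\epsilon_{12}(a)$. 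Hence $H=\langle\epsilon_{12}(a)\,n_\alpha n_\gamma n_\delta,\ (\alpha+\gamma+\delta)^\vee(b)\rangle$ is generated by $G(k)$-points (here $b\in k$ because $k$ is separably closed, and $\epsilon_{12}$, the $n_\xi$, and the cocharacters are $k$-defined); being finite and generated by $k$-rational points, $H$ is defined over $k$.

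For $G$-complete reducibility, conjugation by the $\overline k$-point $v(\sqrt a)$ preserves this property, so it suffices to treat $H_0$. Since $H_0\le L_\lambda$ and $L_\lambda$ is a Levi subgroup of $G$, Proposition~\ref{G-cr-L-cr} applied over $\overline k$ reduces us to showing $H_0$ is $L_\lambda$-cr, and I would show it is in fact $L_\lambda$-irreducible: $H_0$ lies in $[L_\lambda,L_\lambda]$, a product of rank-one groups $G_\alpha,G_\gamma,G_\delta$, and its projection to each $G_\zeta$ is $\langle n_\zeta,\zeta^\vee(b)\rangle$, which is not contained in any Borel subgroup of $G_\zeta$ — inside a Borel the unipotent involution $n_\zeta$ has trivial image in the torus quotient and so cannot invert the order-$3$ element $\zeta^\vee(b)$. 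Since a proper parabolic of $L_\lambda$ induces a proper parabolic in some $G_\zeta$, $H_0$ is $L_\lambda$-ir, hence $G$-cr, hence so is $H$.

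Finally, for the failure over $k$: $\lambda$ is a cocharacter of the $k$-split torus $T$, so $P_\lambda$ is a proper $k$-defined parabolic, and it contains $H$ since $v(\sqrt a)\in R_u(P_\lambda)$ and $H_0\le L_\lambda\le P_\lambda$. If $H$ were contained in a $k$-defined Levi of $P_\lambda$, that Levi would be $u L_\lambda u^{-1}$ for some $u\in R_u(P_\lambda)(k)$, so $g\,H_0\,g^{-1}\le L_\lambda$ for $g:=u^{-1}v(\sqrt a)\in R_u(P_\lambda)(\overline k)$; decomposing in $P_\lambda=L_\lambda\ltimes R_u(P_\lambda)$ and using $H_0\le L_\lambda$ forces $g\in C_{R_u(P_\lambda)}(H_0)$. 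The crux is the identity $C_{R_u(P_\lambda)}(H_0)=U_{12}$: by (ii) a $c$-fixed element of $R_u(P_\lambda)$ has the form $\epsilon_4(s)\epsilon_{11}(t)\epsilon_{12}(u)$, and imposing $n$-invariance — using the swap $U_4\leftrightarrow U_{11}$ and the commutator $\epsilon_{12}(\pm st)$ from (i) — forces $s=t$ and $st=0$, hence $s=t=0$. Thus $v(\sqrt a)\in u\cdot U_{12}$, which is absurd, since every element of $R_u(P_\lambda)(k)\cdot U_{12}$ has its $U_4$- and $U_{11}$-coordinates in $k$, whereas those of $v(\sqrt a)$ equal $\sqrt a\notin k$. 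Hence $H$ lies in no $k$-defined Levi of $P_\lambda$, so it is not $G$-cr over $k$. I expect the centralizer computation $C_{R_u(P_\lambda)}(H_0)=U_{12}$ to be the main obstacle: it is exactly this smallness of the honest centralizer — i.e.\ the nonseparability of $H_0$ in $G$ — that leaves just enough room for the inseparable twist by $v(\sqrt a)$ to change the $G(k)$-conjugacy class of the ambient Levi without affecting $G$-complete reducibility over $\overline k$.
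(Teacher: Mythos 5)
Your proposal is correct, and its first two parts ($k$-definedness via $v(\sqrt a)\cdot n=n\epsilon_{12}(a)$ and commutation with $(\alpha+\gamma+\delta)^{\vee}(b)$; $G$-complete reducibility via Proposition~\ref{G-cr-L-cr} and $L_\lambda$-irreducibility of $H_0$) coincide with the paper's argument, with the welcome extra detail of an actual justification of $L_\lambda$-irreducibility where the paper says ``by inspection.'' The third part reaches the same contradiction by a genuinely different computation. The paper takes a general $u=\prod_{\zeta}\epsilon_\zeta(x_\zeta)\in R_u(P_\lambda)(k)$, conjugates the single generator $n_\alpha n_\gamma n_\delta\epsilon_{12}(a)$ by $u^{-1}$, and reads off from the $\epsilon_{12}$-coordinate the unsolvable equation $(x_4+x_5+x_6+x_7)^2=a$ with $a\notin k^2$; the torus generator is never used at this stage. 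You instead use both generators: the elementary observation that $g\in R_u(P_\lambda)$ with $gH_0g^{-1}\le L_\lambda$ forces $g\in C_{R_u(P_\lambda)}(H_0)$, followed by the computation $C_{R_u(P_\lambda)}(H_0)=U_{12}$ (the torus element kills the coordinates $x_5,\dots,x_{10}$ and the $n$-action kills $x_4,x_{11}$), reduces everything to the statement $\sqrt a\notin k$. This is tidier and isolates the nonseparability as the identity $C_{R_u(P_\lambda)}(H_0)=U_{12}$ --- which is in fact exactly the computation the paper postpones to Lemma~\ref{centralizerofM} in the proof of Proposition~\ref{secondmain} --- at the modest cost of needing the conjugation action of both generators rather than one. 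Both routes are complete and correct.
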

\begin{proof}
First, we have 
$
(n_\alpha n_\gamma n_\delta) \cdot (\beta) = (n_\alpha n_\gamma n_\delta) \cdot 4 = 11, \;
(n_\alpha n_\gamma n_\delta) \cdot 11 = 4.
$
Using this and the commutation relations~\cite[Lem.~32.5 and Prop.~33.3]{Humphreys-book1}, we obtain
\begin{equation*}
v(\sqrt a)\cdot (n_{\alpha} n_\gamma n_\delta)=(n_\alpha n_\gamma n_\delta) \epsilon_{12}(a).
\end{equation*}
Since $\langle 4, (\alpha+\gamma+\delta)^{\vee}\rangle=-3$, $\langle 11, (\alpha+\gamma+\delta)^{\vee}\rangle=3$, and $b^3=1$, $v(\sqrt a)$ commutes with $(\alpha+\gamma+\delta)^{\vee}(b)$. Now it is clear that $H$ is $k$-defined (since it is generated by $k$-points). 

Now we show that $H$ is $G$-cr. It is sufficient to show that $H':=v(\sqrt a)^{-1}\cdot H=\langle n_\alpha n_\gamma n_\delta,\; (\alpha+\gamma+\delta)^{\vee}(b)$ is $G$-cr since it is $G$-conjugate to $H$. Since $H'$ is contained in $L_\lambda$, by Proposition~\ref{G-cr-L-cr} it is enough to show that $H'$ is $L_\lambda$-cr. By inspection, $H'$ is $L_\lambda$-ir (this is easy since $L_\lambda=L_\alpha\times L_\gamma\times L_\delta=A_1\times A_1 \times A_1$).

Next, we show that $H$ is not $G$-cr over $k$. Suppose the contrary. Clearly $H$ is contained in $P_\lambda$ that is $k$-defined. Then there exists a $k$-defined Levi subgroup of $P_\lambda$ containing $H$. Then by~\cite[Lem.~2.5(\rmnum{3})]{Bate-uniform-TransAMS} there exists $u\in R_u(P_\lambda)(k)$ such that $H$ is contained in $u\cdot L_\lambda$. Thus $n_\alpha n_\gamma n_\delta \epsilon_{12}(a) < u\cdot L_\lambda$. So $u^{-1}\cdot (n_\alpha n_\gamma n_\delta \epsilon_{12}(a)) < L_{\lambda}$. By~\cite[Prop.~8.2.1]{Springer-book}, we set
$
u:=\prod_{\zeta\in \Psi(R_u(P_\lambda))}\epsilon_\zeta(x_\zeta).
$
Using the labelling of the positive roots above, we have $\Psi(R_u(P_\lambda))=\{4,\cdots 12\}$. We compute how $n_\alpha n_\gamma n_\delta$ acts on $\Psi(R_u(P_\lambda))$: 
\begin{equation}\label{perm}
n_\alpha n_\gamma n_\delta = (4\;11) (5\;10) (6\;9) (7\;8) (12). 
\end{equation}
Using this and the commutation relations,
\begin{alignat*}{2}
u^{-1}\cdot (n_\alpha n_\gamma n_\delta \epsilon_{12}(a))
=&n_\alpha n_\gamma n_\delta \epsilon_4(x_4+x_{11})\epsilon_{5}(x_5+x_{10})\epsilon_{6}(x_6+x_9)\epsilon_{7}(x_{7}+x_{8})\\
&\epsilon_{8}(x_7+x_8)\epsilon_{9}(x_6+x_{9})\epsilon_{10}(x_5+x_{10})\epsilon_{11}(x_4+x_{11})\\
&\epsilon_{12}(x_{4}^2+x_{5}^2+x_{6}^2+x_{7}^2 +a).
\end{alignat*}
Thus if $u^{-1}\cdot (n_\alpha\sigma \epsilon_{\alpha+2\beta+\gamma+\delta}(a)) < L_{\lambda}$ we must have
\begin{equation*}
x_4=x_{11},\; x_5=x_{10},\; x_{6}=x_{9},\; x_7=x_8,\; x_{4}^2+ x_{5}^2+x_{6}^2+x_{7}^2 +a=0.
\end{equation*}
The last equation gives $(x_4+x_5+x_6+x_7)^2=a$. This is impossible since $a\notin k^2$. We are done. 
\end{proof}

\begin{rem}\label{D4nonsep}
From the computations above we see that the curve $C(x):=\{\epsilon_{4}(x)\epsilon_{11}(x)\mid x\in \overline k\}$ is not contained in $C_G(H)$, but the corresponding element in $\textup{Lie}(G)$, that is, $e_4+e_{11}$ is contained in $\mathfrak{c}_{\mathfrak{g}}(H)$. Then the argument in the proof of~\cite[Prop.~3.3]{Uchiyama-Separability-JAlgebra} shows that $\textup{Dim}(C_G(H))$ is strictly smaller than $\textup{Dim}(\mathfrak{c}_{\mathfrak{g}}(H))$. So $H$ is non-separable in $G$. 
In fact, combining~\cite[Thm.~1.5]{Bate-cocharacter-Arx} and~\cite[Thm.~9.3]{Bate-cocharacter-Arx} we have that if a $k$-subgroup $H$ of $G$ is separable in $G$ and $H$ is $G$-cr, then it is $G$-cr over $k$. 
\end{rem}

\vspace{5mm}
Now we move on to the second main result in this section. We use the same $k$, $a$, $b$, $G$, and, $\lambda$ as above. Let $v(\sqrt a):=\epsilon_{-11}(\sqrt a)\epsilon_{-4}(\sqrt a)$. Let
\begin{equation*}
K:=v(\sqrt a)\cdot \langle n_{\alpha} n_{\gamma} n_{\delta},\; (\alpha+\gamma+\delta)^{\vee}(b)\rangle=\langle n_\alpha n_\gamma n_\delta \epsilon_{-12}(a), \;  (\alpha+\gamma+\delta)^{\vee}(b)\rangle. 
\end{equation*}
Define
\begin{equation*}
H:=\langle K, \; \epsilon_{5}(1) \rangle.
\end{equation*}

\begin{prop}\label{secondmain}
$H$ is $k$-defined. Moreover, $H$ is $G$-ir over $k$ but not $G$-cr. 
\end{prop}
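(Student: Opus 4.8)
\textbf{Proof proposal for Proposition~\ref{secondmain}.}

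The plan is to mirror the structure of the proof of Proposition~\ref{firstmain}, adapting it to the ``vice versa'' situation: here $H$ should be $G$-ir over $k$ but fail to be $G$-cr. First I would verify $k$-definedness. The element $n_\alpha n_\gamma n_\delta \epsilon_{-12}(a)$ arises, just as before, from conjugating $n_\alpha n_\gamma n_\delta$ by $v(\sqrt a)=\epsilon_{-11}(\sqrt a)\epsilon_{-4}(\sqrt a)$: since $n_\alpha n_\gamma n_\delta$ swaps the negative roots $-4$ and $-11$ (by the permutation~\eqref{perm} applied to negative roots) and the commutation relations produce a correction term in $U_{-12}$ with coefficient $a$ (the square of $\sqrt a$), one gets $v(\sqrt a)\cdot(n_\alpha n_\gamma n_\delta)=n_\alpha n_\gamma n_\delta\epsilon_{-12}(a)$; and as in Proposition~\ref{firstmain}, $v(\sqrt a)$ commutes with $(\alpha+\gamma+\delta)^\vee(b)$ because $\langle -4,(\alpha+\gamma+\delta)^\vee\rangle$ and $\langle -11,(\alpha+\gamma+\delta)^\vee\rangle$ are $\pm 3$ and $b^3=1$. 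Hence $K$, and therefore $H=\langle K,\epsilon_5(1)\rangle$, is generated by $k$-points, so $H$ is $k$-defined.

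Next I would show $H$ is not $G$-cr. The natural approach is to produce a proper parabolic $P$ of $G$ containing $H$ and show $H$ lies in no Levi subgroup of $P$. The cocharacter $\lambda$ is the obvious candidate: $K$ is conjugate (over $\overline k$) to $\langle n_\alpha n_\gamma n_\delta,(\alpha+\gamma+\delta)^\vee(b)\rangle\le L_\lambda$, and $\epsilon_5(1)\in R_u(P_\lambda)$ (root $5$ lies in $\Psi(R_u(P_\lambda))=\{4,\dots,12\}$), so $H\le P_\lambda$. To see $H$ is not contained in any $\overline k$-Levi of $P_\lambda$, I would run the same $R_u(P_\lambda)$-conjugation computation as in Proposition~\ref{firstmain}: write a general $u=\prod_{\zeta\in\{4,\dots,12\}}\epsilon_\zeta(x_\zeta)$, compute $u^{-1}\cdot(n_\alpha n_\gamma n_\delta\epsilon_{-12}(a)\cdot\epsilon_5(1)\cdot\text{stuff})$ using~\eqref{perm} and the commutation relations, and extract the constraints forcing the $R_u(P_\lambda)$-component to vanish. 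The presence of $\epsilon_5(1)$ together with the term $\epsilon_{-12}(a)$ (which, after pushing $\epsilon_{-12}$ through $\lambda$, contributes to a positive-root component because $-12$ pairs negatively with $\lambda$ — wait, one must be careful: $\epsilon_{-12}(a)\notin P_\lambda$) means I should instead conjugate $K$ back by $v(\sqrt a)^{-1}$ first: $v(\sqrt a)^{-1}\cdot H=\langle n_\alpha n_\gamma n_\delta,(\alpha+\gamma+\delta)^\vee(b),v(\sqrt a)^{-1}\cdot\epsilon_5(1)\rangle$, and $v(\sqrt a)^{-1}\cdot\epsilon_5(1)=\epsilon_5(1)\cdot(\text{correction in }U_{\text{root } 5-4\text{ or }5-11})$ — root $5-(-4)$ and $5-(-11)$ being sums, these corrections land in roots pairing $\ge 0$ with $\lambda$ only if the arithmetic works out; the cleanest route is to keep everything in $P_\lambda$-coordinates and show the ``anisotropic'' obstruction $x^2=a$ (with $a\notin k^2$, but here we are over $\overline k$ so this is fine — $H$ \emph{is} $G$-cr would be the wrong conclusion) reappears. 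I expect the computation to show that over $\overline k$ there \emph{is} no Levi containing $H$, i.e. the constant term genuinely obstructs, making $H$ not $G$-cr.

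For $G$-irreducibility over $k$: I would show $H$ is contained in no \emph{proper $k$-defined} parabolic of $G$. The key leverage is that $H\supseteq K$, which after conjugation contains $n_\alpha n_\gamma n_\delta$ — an element whose action~\eqref{perm} mixes the root groups — together with $\epsilon_5(1)$ and the torus element $(\alpha+\gamma+\delta)^\vee(b)$; the subgroup $\langle n_\alpha n_\gamma n_\delta,(\alpha+\gamma+\delta)^\vee(b)\rangle$ is already $L_\lambda$-ir where $L_\lambda=A_1A_1A_1$. Adjoining $\epsilon_5(1)$ (with root $5=\gamma+\delta$ in the standard labelling, a root outside $L_\lambda$) forces any parabolic containing $H$ to contain both a ``large'' part of $L_\lambda$ and a unipotent element not normalizing the relevant Levi; combined with the non-$G$-cr-ness established above and Remark~\ref{D4nonsep}'s principle (a $k$-subgroup that is $G$-cr over $k$ and non-separable can fail to be $G$-cr — used contrapositively), one concludes no proper $k$-parabolic can contain $H$. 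Concretely I would assume $H\le P$ for some proper $k$-parabolic $P=P_\mu$, use that $n_\alpha n_\gamma n_\delta$ and $(\alpha+\gamma+\delta)^\vee(b)$ pin down $\mu$ up to the $A_1A_1A_1$-irreducibility constraint, and then derive a contradiction from $\epsilon_5(1)\in P_\mu$ together with the requirement that $H$ not be $P_\mu$-cr (which would contradict the already-established failure of $G$-complete reducibility via Proposition~\ref{G-cr-L-cr}-type reasoning). The main obstacle I anticipate is precisely this last step — systematically ruling out \emph{all} proper $k$-parabolics rather than just $P_\lambda$ — since $G$-irreducibility is a statement quantifying over every $k$-parabolic; I would handle it by reducing, via the conjugacy of Levis and the explicit $D_4$ root combinatorics, to checking a short list of standard parabolics containing the relevant generators.
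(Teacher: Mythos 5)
Your plan for $k$-definedness is fine and matches the paper. The two substantive halves, however, both have problems, and the $G$-irreducibility half is missing the key idea. The paper's argument that $H$ is $G$-ir over $k$ is a two-step mechanism that your proposal never articulates: (i) it proves (Lemma~\ref{uniquepara}) that $v(\sqrt a)\cdot P_\lambda$ is the \emph{unique} proper parabolic of $G$ over $\overline k$ containing $H$ --- this uses the computation $C_G(\langle n_\alpha n_\gamma n_\delta,(\alpha+\gamma+\delta)^\vee(b)\rangle)^{\circ}=G_{12}$ to force the defining cocharacter $\mu$ of any such parabolic into $G_{12}$, and then the Bruhat decomposition of $G_{12}$ together with the element $\epsilon_5(1)\epsilon_1(\sqrt a)$ to rule out the long-Weyl-element cell; and (ii) it proves (Lemma~\ref{nonkdefined}) that this unique parabolic is \emph{not $k$-defined}, via the rational Bruhat decomposition: were $v(\sqrt a)\cdot P_\lambda$ $k$-defined it would be $G(k)$-conjugate to $P_\lambda$, forcing $v(\sqrt a)$ into $P_\lambda R_u(P_\lambda^{-})(k)$, contradicting $\sqrt a\notin k$. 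Your proposal instead tries to ``derive a contradiction from $\epsilon_5(1)\in P_\mu$ together with the requirement that $H$ not be $P_\mu$-cr,'' which cannot work as stated: $H$ \emph{is} contained in a proper parabolic (namely $v(\sqrt a)\cdot P_\lambda$), so no argument that never invokes $k$-definedness can establish $G$-irreducibility over $k$ --- the entire point is that the one available parabolic fails to be defined over $k$. Your appeal to Remark~\ref{D4nonsep} ``used contrapositively'' does not yield the needed implication either. A small factual slip along the way: root $5$ is $\alpha+\beta$, not $\gamma+\delta$ (the latter is not even a root of $D_4$).

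For the failure of $G$-complete reducibility, your route (exhibit $H\le v(\sqrt a)\cdot P_\lambda$ and show $H$ lies in no Levi of that parabolic) is valid in principle, but you leave the decisive step as ``I expect the computation to show\ldots,'' and your own aside exposes the difficulty: the obstruction in Proposition~\ref{firstmain} was the insolubility of $x^2=a$ over $k$, which evaporates over $\overline k$, so that constant-term argument cannot be recycled, and one must instead track how the new generator $\epsilon_5(1)\epsilon_1(\sqrt a)$ behaves under $R_u(P_\lambda)$-conjugacy (its $L_\lambda$-part $\epsilon_1(\sqrt a)$ acts nontrivially on $R_u(P_\lambda)$, so the $U_5$-component is not obviously preserved and a genuine computation is required). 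The paper sidesteps all of this: it computes $C_G(H)^{\circ}=v(\sqrt a)\cdot U_{12}$, observes that this is a nontrivial connected unipotent group, hence not $G$-cr by Borel--Tits, and concludes that $H$ is not $G$-cr from the fact that $G$-complete reducibility of $H$ would force that of $C_G(H)$. You should either carry out your Levi computation in full or switch to the centralizer argument.
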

\begin{proof}
$H$ is clearly $k$-defined. First, we show that $H$ is $G$-ir over $k$. Note that
\begin{equation*}
v(\sqrt a)^{-1}\cdot H = \langle n_\alpha n_\gamma n_\delta, \; (\alpha+\gamma+\delta)^{\vee}(b),\; \epsilon_{5}(1)\epsilon_{1}(\sqrt a)\rangle.
\end{equation*}
Thus we see that $v(\sqrt a)^{-1}\cdot H$ is contained in $P_\lambda$. So $H$ is contained in $v(\sqrt a)\cdot P_\lambda$. 

\begin{lem}\label{uniquepara}
$v(\sqrt a)\cdot P_\lambda$ is the unique proper parabolic subgroup of $G$ containing $H$.
\end{lem}
\begin{proof}
Suppose that $P_\mu$ is a proper parabolic subgroup of $G$ containing $v(\sqrt a)^{-1}\cdot H$. In the proof of Proposition~\ref{firstmain} we have shown that $M:=\langle n_\alpha n_\gamma n_\delta, (\alpha+\gamma+\delta)^{\vee}(b)\rangle$ is $G$-cr. Then there exists a Levi subgroup $L$ of $P_\mu$ containing $M$ since $M$ is contained in $P_\mu$. Since Levi subgroups of $P_\mu$ are $R_u(P_\mu)$-conjugate by~\cite[Lem.~2.5(\rmnum{3})]{Bate-uniform-TransAMS}, without loss, we set $L:=L_\mu$. Then $M<L_\mu=C_G(\mu(\overline k^*))$, so $\mu(\overline k^*)$ centralizes $M$. Recall that by~\cite[Thm.~13.4.2]{Springer-book}, $C_{R_u(P_\lambda)}(M)^{\circ}\times C_{L_\lambda}(M)^{\circ}\times C_{R_u(P_\lambda^{-})}(M)^{\circ}$ is an open set of $C_G(M)^{\circ}$ where $P_\lambda^{-}$ is the opposite of $P_\lambda$ containing $L_\lambda$.  
\begin{lem}\label{centralizerofM}
$C_G(M)^{\circ}=G_{12}$.
\end{lem}
\begin{proof}
First of all, from Equation~(\ref{perm}) we see that $G_{12}$ is contained in $C_G(n_\alpha n_\gamma n_\delta)$. Since $\langle 12, (\alpha+\gamma+\delta)^{\vee} \rangle=0$, $G_{12}$ is also contained in $C_G((\alpha+\gamma+\delta)^{\vee}(\overline k^*))$. So $G_{12}$ is contained in $C_G(M)$. Set $u:=\prod_{i\in \Psi(R_u(P_\lambda))}\epsilon_{i}(x_i)$ for some $x_i \in \overline k$. Using Equation (\ref{perm}) and the commutation relations, we obtain
\begin{alignat*}{2}
(n_\alpha n_\gamma n_\delta)\cdot u =& \epsilon_{4}(x_{11})\epsilon_{5}(x_{10})\epsilon_{6}(x_9)\epsilon_7(x_{8})\epsilon_8(x_{7})\epsilon_9(x_6)\epsilon_{10}(x_{5})\epsilon_{11}(x_4)\\
&\epsilon_{12}(x_4 x_{11}+x_5 x_{10} +x_6 x_9+ x_7 x_8+ x_{12}). 
\end{alignat*}
So, if $u\in C_{R_u(P_\lambda)}(n_\alpha n_\gamma n_\delta)$ we must have
$x_4=x_{11}, \; x_5=x_{10},\; x_{6}=x_{9},\; x_7=x_8$, and $x_4 x_{11}+x_5 x_{10} +x_6 x_9+ x_7 x_8=0$. But $\langle \zeta, (\alpha+\gamma+\delta)^{\vee}\rangle=-1$ for $\zeta=\{5, 6, 7\}$, so $x_5=x_6=x_7=0$ for $u\in C_{R_u(P_\lambda)}(M)$. Then 
\begin{equation*}
(n_\alpha n_\gamma n_\delta)\cdot u = \epsilon_4(x_4)\epsilon_{11}(x_4)\epsilon_{12}(x_4^2+x_{12}).
\end{equation*}
So we must have $x_4^2=0$ if $u\in C_{R_u(P_\lambda)}(M)$. Thus we conclude that $C_{R_u(P_\lambda)}(M)=U_{12}$. Similarly, we can show that $C_{R_u(P_{\lambda}^{-})}(M)=U_{-12}$. A direct computation shows that $C_{L_\lambda}(M) < T$ and $C_T(n_\alpha n_\gamma n_\delta)=(\alpha+2\beta+\gamma+\delta)^{\vee}(\overline k^*)<G_{12}$. We are done.
\end{proof}
Since $\mu(\overline k^*)$ centralizes $M$, Lemma~\ref{centralizerofM} yields $\mu(\overline k^*)<G_{12}$. Then we can set $\mu:=g\cdot (\alpha+2\beta+\gamma+\delta)^{\vee}$ for some $g\in G_{12}$. By the Bruhat decomposition, $g$ is one of the following forms:
\begin{alignat*}{2}
&(1)\;g=(\alpha+2\beta+\gamma+\delta)^{\vee}(s)\epsilon_{12}(x_1),\\
&(2)\;g=\epsilon_{12}(x_1)n_{12}(\alpha+2\beta+\gamma+\delta)^{\vee}(s)\epsilon_{12}(x_2)\\
&\textup{for some } x_1, x_2\in \overline k, s\in \overline k^*.
\end{alignat*}
We rule out the second case. Suppose $g$ is of the second form. Note that $\epsilon_{5}(1)\epsilon_{1}(\sqrt a)\in v(\sqrt a)^{-1}\cdot H< P_\mu$. 
But $P_\mu=P_{g\cdot (\alpha+2\beta+\gamma+\delta)^{\vee}}=g\cdot P_{(\alpha+2\beta+\gamma+\delta)^{\vee}}$. So it is enough to show that $g^{-1}\cdot (\epsilon_{5}(1)\epsilon_{1}(\sqrt a))\notin P_{(\alpha+2\beta+\gamma+\delta)^{\vee}}$. Since $U_{12}$ and $(\alpha+2\beta+\gamma+\delta)(\overline k^*)$ are contained in $P_{(\alpha+2\beta+\gamma+\delta)^{\vee}}$ we can assume $g=n_{12}$. We have
\begin{equation*}
n_{12}=n_\alpha n_\beta n_\alpha n_\gamma n_\beta n_\alpha n_\delta n_\beta n_\alpha n_\gamma n_\beta n_\delta \textup{ (the longest element in the Weyl group of $D_4$)}.
\end{equation*}
Using this, we can compute how $n_{12}$ acts on each root subgroup of $G$. In particular $n_{12}^{-1}\cdot U_{5}=U_{-5}$ and $n_{12}^{-1}\cdot U_{1}= U_{-1}$. Thus
\begin{alignat*}{2}
n_{12}^{-1}\cdot (\epsilon_{5}(1)\epsilon_{1}(\sqrt a)) &= \epsilon_{-5}(1)\epsilon_{-1}(\sqrt a)\notin P_{(\alpha+2\beta+\gamma+\delta)^{\vee}}.
\end{alignat*}
So $g$ must be of the first form. Then $g\in P_\lambda$. Thus $P_\mu=P_{g\cdot \lambda}=g\cdot P_\lambda=P_\lambda$. We are done.
\end{proof}

\begin{lem}\label{nonkdefined}
$v(\sqrt a)\cdot P_\lambda$ is not $k$-defined.
\end{lem}
\begin{proof}
Suppose the contrary. Since $P_\lambda$ is $k$-defined, $v(\sqrt a)\cdot P_\lambda$ is $G(k)$-conjugate to $P_\lambda$ by~\cite[Thm.~20.9]{Borel-AG-book}. Thus we can put $P_\lambda=g v(\sqrt a)\cdot P_\lambda$ for some $g\in G(k)$. So $g v(\sqrt a)\in P_\lambda$ since parabolic subgroups are self-normalizing. Then $g=pv(\sqrt a)^{-1}$ for some $p\in P_\lambda$. Thus $g$ is a $k$-point of $P_\lambda R_u(P_\lambda^{-})$. Then by the rational version of the Bruhat decomposition~\cite[Thm.~21.15]{Borel-AG-book}, there exists a unique $p'\in P_\lambda(k)$ and a unique $u'\in R_u(P_\lambda^{-})(k)$ such that $g=p' u'$. This is a contradiction since $v(\sqrt a)\notin R_u(P_\lambda^{-})(k)$. 
\end{proof} 
Now Lemmas~\ref{uniquepara},~\ref{nonkdefined} show that $H$ is $G$-ir over $k$. 

\begin{lem}\label{nonG-cr}
$H$ is not $G$-cr. 
\end{lem}
\begin{proof}
We had $C_G(M)^{\circ}=G_{12}$. Then $C_G(v(\sqrt a)^{-1}\cdot H)^{\circ}<G_{12}$ since $M<v(\sqrt a)^{-1}\cdot H$. Using the commutation relations, we see that $U_{12}< C_G(v(\sqrt a)^{-1}\cdot H)$. Note that $v(\sqrt a)^{-1}\cdot H$ contains $h:=\epsilon_{5}(1)\epsilon_{1}(\sqrt a)$ that does not commute with any non-trivial element of $U_{-12}$. Also, since $\langle 5, \lambda\rangle = 4$,  $h$ does not commute with any non-trivial element of $(\alpha+2\beta+\gamma+\delta)^{\vee}(\overline k^*)$.
Thus we conclude that $C_G(v(\sqrt a)^{-1}\cdot H)^{\circ}=U_{12}$. So $C_G(H)^{\circ}=v(\sqrt a)\cdot U_{12}$ which is unipotent. Then by the classical result of Borel-Tits~\cite[Prop.~3.1]{Borel-Tits-unipotent-invent}, we see that $C_G(H)^{\circ}$ is not $G$-cr.
Since $C_G(H)^{\circ}$ is a normal subgroup of $C_G(H)$, by~\cite[Ex.~5.20]{Bate-uniform-TransAMS}, $C_G(H)$ is not $G$-cr. Then by~\cite[Cor.~3.17]{Bate-geometric-Inventione}, $H$ is not $G$-cr. 
\end{proof}
\end{proof}

\section{Tits' center conjecture}
In~\cite{Tits-colloq}, Tits conjectured the following:
\begin{con}\label{centerconjecture}
Let $X$ be a spherical building. Let $Y$ be a convex contractible simplicial subcomplex of $X$. If $H$ is an automorphism group of $X$ stabilizing $Y$, then there exists a simplex of $Y$ fixed by $H$.   
\end{con}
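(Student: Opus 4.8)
The plan is to pass to the geometric realization $|X|$ of the building, which carries a natural $\mathrm{CAT}(1)$ metric in which each apartment is a round unit sphere of dimension $\dim X$, together with the closed convex subset $|Y|\subseteq|X|$; the group $H$ then acts on $|X|$ by isometries preserving $|Y|$. Because $Y$ is a subcomplex, every point $p\in|Y|$ has a well-defined \emph{carrier}, namely the unique open simplex of $X$ containing $p$, and this carrier again lies in $Y$; moreover any simplicial automorphism fixing $p$ stabilizes its carrier. Hence it suffices to produce a single $H$-invariant point of $|Y|$: its carrier is then an $H$-fixed simplex of $Y$, as required. I would organize the argument according to the circumradius $r$ of $|Y|$.

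If $r<\pi/2$, then $|Y|$ has a unique circumcenter. Indeed, in a $\mathrm{CAT}(1)$ space balls of radius $<\pi/2$ are strictly convex, so the standard uniform-convexity argument behind the Bruhat--Tits / Cartan fixed-point theorem yields existence and uniqueness of the circumcenter of any bounded closed convex set of radius $<\pi/2$. Being canonical, the circumcenter is $H$-invariant, and we are done. The substantive case is $r\ge\pi/2$, in which $|Y|$ contains a pair of antipodal points. Here I would use the structure theory of convex subsets of spherical buildings in the spirit of Balser--Lytchak: the antipodal points force $|Y|$ to split metrically as a spherical join $|Y|=|Y_0|*|Y_1|$ with $|Y_0|$ an isometrically embedded round sphere, and this sphere factor is canonical, so $H$ preserves both factors. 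On the simplicial level $Y$ decomposes compatibly, and the homological join formula combined with the contractibility of $Y$ forces the complementary factor $Y_1$ to be a contractible convex subcomplex of a building of strictly smaller rank. An induction on $\dim X$ then closes the argument: one either reaches an $H$-fixed vertex, or reaches a convex subcomplex that is a genuine subbuilding --- but a spherical building of positive rank is homotopy equivalent, by Solomon--Tits, to a nontrivial wedge of spheres, hence never contractible, so that alternative is excluded. This is precisely where the contractibility hypothesis is indispensable.

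The step I expect to be the main obstacle is making this inductive reduction unconditional for the exceptional types $E_6$, $E_7$, $E_8$, $F_4$, $G_2$. For the classical families the join/subbuilding dichotomy above is enough, but for a convex subcomplex of an exceptional building that is neither a nontrivial join nor a subbuilding one must, through a delicate type-by-type analysis of the poset of convex subcomplexes, exhibit a canonical proper convex subcomplex --- and ultimately a canonical $H$-invariant simplex --- in order to continue the induction. This combinatorial core is the content of the work on the center conjecture by M\"uhlherr--Tits, Ramos-Cuevas, Leeb, and M\"uhlherr--Weiss. A complete proof therefore combines the geometric reduction sketched above with that case analysis; for the present paper only the resulting statement is needed, entering through Proposition~\ref{paris}.
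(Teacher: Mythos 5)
This statement is not proved in the paper at all: it is recorded as Conjecture~\ref{centerconjecture} and then invoked as a known deep theorem, with the proof attributed to Tits, M\"uhlherr, Leeb, Ramos-Cuevas and to the uniform argument of M\"uhlherr--Weiss; the paper only consumes the result, through Proposition~\ref{paris} and Proposition~\ref{normalizer}. Your write-up is a sketch of the strategy those authors follow rather than a proof, and by your own admission the decisive step --- showing that a convex contractible subcomplex which is neither a nontrivial join nor a subbuilding still admits a canonical, hence $H$-invariant, simplex, carried out type by type for the exceptional diagrams (or uniformly \`a la M\"uhlherr--Weiss) --- is deferred entirely to the literature. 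That step is the content of the theorem; naming the papers in which it is done is a citation, not an argument. So as a proof the proposal has a genuine gap, even though the surrounding geometric framework (geometric realization with its $\mathrm{CAT}(1)$ metric, carriers of invariant points, circumcenters, Solomon--Tits to rule out contractible subbuildings) is the correct and standard scaffolding.

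One intermediate assertion also needs repair: you pass from ``circumradius $r\ge\pi/2$'' directly to ``$|Y|$ contains a pair of antipodal points,'' and that implication is neither justified nor obvious. The delicate case in the $\mathrm{CAT}(1)$ approach is precisely $r=\pi/2$ with no antipodes available; there Balser--Lytchak (and, in higher rank, Leeb--Ramos-Cuevas) analyze the set of circumcenters, which is again a closed convex $H$-invariant subset, and run a dimension induction on that set rather than on a join decomposition. The join splitting you describe is only available once a pair of points at distance $\pi$ has actually been exhibited. If you want your outline to stand as a proof you must close both holes: the $r=\pi/2$ dichotomy and the combinatorial core for the exceptional types. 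For the purposes of this paper the correct move is the one the author takes, namely to quote the result with references.
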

This so-called center conjecture of Tits was proved by case-by-case analyses by Tits, M\"{u}hlherr, Leeb, and Ramos-Cuevas~\cite{Leeb-Ramos-TCC-GFA},~\cite{Muhlherr-Tits-TCC-JAlgebra},~\cite{Ramos-centerconj-Geo}. Recently uniform proof was given in~\cite{Weiss-center-Fourier}. In relation to the theory of complete reducibility, Serre showed~\cite{Serre-building}:
\begin{prop}\label{SerreContractible}
Let $G$ be a reductive $k$-group. Let $\Delta(G)$ be the building of $G$. If $H$ is not $G$-cr, then the fixed point subcomplex $\Delta(G)^H$ is  convex and contractible. 
\end{prop}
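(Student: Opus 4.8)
The plan is to translate the statement entirely into the combinatorics of the spherical building and then invoke Serre's contractibility theorem. First I would set up the dictionary: the simplices of $\Delta(G)$ are the proper parabolic subgroups of $G$ (over $\overline{k}$), ordered by reverse inclusion, with $G$ acting by conjugation. Since parabolic subgroups are self-normalizing, the simplex $\sigma_P$ attached to $P$ is fixed by $H$ precisely when $H \le N_G(P) = P$. Hence $\Delta(G)^H$ is the subcomplex spanned by the proper parabolic subgroups containing $H$. Note also that if $H$ is not $G$-cr then $H$ is not $G$-ir, so $H$ lies in some proper parabolic subgroup and $\Delta(G)^H \neq \varnothing$, which is needed for contractibility to be a meaningful assertion.

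Next I would verify convexity. For proper parabolic subgroups $P$ and $Q$ the group $(P\cap Q)R_u(P)$ is again a parabolic subgroup, contained in $P$, and its simplex is the building-theoretic projection $\mathrm{proj}_{\sigma_P}(\sigma_Q)$; convexity of a subcomplex is by definition closure under these projections. If $H\le P$ and $H\le Q$, then $H \le P\cap Q \le (P\cap Q)R_u(P)$, and since $(P\cap Q)R_u(P)$ is a proper parabolic (it lies in the proper parabolic $P$), its simplex lies in $\Delta(G)^H$. So $\Delta(G)^H$ is convex.

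Then I would link $G$-complete reducibility to the building-theoretic notion of complete reducibility of a subcomplex, namely that every simplex of the subcomplex has an opposite simplex inside it. The two facts needed are: the Levi subgroups of a parabolic $P$ are exactly the intersections $P\cap P^-$ with $P^-$ a parabolic opposite to $P$; and $\sigma_P,\sigma_Q$ are opposite in $\Delta(G)$ exactly when $P,Q$ are opposite parabolics. Given these: if $H$ is $G$-cr and $\sigma_P\in\Delta(G)^H$, choose a Levi $L$ of $P$ with $H\le L$ and write $L = P\cap P^-$; then $H\le P^-$, $P^-$ is again a proper parabolic, and $\sigma_{P^-}\in\Delta(G)^H$ is opposite to $\sigma_P$. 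Conversely, if every simplex of $\Delta(G)^H$ has an opposite inside it, then for any proper parabolic $P\supseteq H$ one gets $Q$ opposite to $P$ with $H\le Q$, hence $H\le P\cap Q$, a Levi of $P$; so $H$ is $G$-cr. Thus $H$ is $G$-cr if and only if $\Delta(G)^H$ is completely reducible as a subcomplex, and the hypothesis of the proposition says exactly that $\Delta(G)^H$ is \emph{not} completely reducible.

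Finally, $\Delta(G)^H$ is a nonempty convex subcomplex of the spherical building $\Delta(G)$ which is not completely reducible, so Serre's dichotomy for convex subcomplexes of spherical buildings---a nonempty convex subcomplex is either completely reducible or contractible, \cite{Serre-building}---forces $\Delta(G)^H$ to be contractible. I expect this last step to be the real obstacle: it is a genuine theorem about spherical buildings, not a formal consequence of the group-theoretic setup, and its proof (due to Serre) uses the combinatorial convexity established above together with an explicit contraction assembled from the projection maps. The preceding steps are the routine, but necessary, translation between parabolic subgroups and simplices.
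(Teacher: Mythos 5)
Your outline is correct, and it reconstructs exactly the argument behind the result the paper is quoting: the paper gives no proof of Proposition~\ref{SerreContractible} at all, simply citing Serre, and your translation (fixed simplices $=$ proper parabolics containing $H$, convexity via $(P\cap Q)R_u(P)=\mathrm{proj}_{\sigma_P}(\sigma_Q)$, $G$-cr $\Leftrightarrow$ building-theoretic complete reducibility of $\Delta(G)^H$) is precisely Serre's own reduction, with the genuinely nontrivial input correctly isolated as his dichotomy that a nonempty convex subcomplex of a spherical building is either completely reducible or contractible. No gaps; just note that the only direction of the dichotomy you actually need is ``not completely reducible $\Rightarrow$ contractible,'' which Serre proves by geodesic contraction toward a simplex having no opposite in the subcomplex.
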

We identify the set of proper $k$-parabolic subgroups of $G$ with $\Delta(G)$ in the usual sense of Tits~\cite{Tits-book}. Note that for a subgroup $H$ of $G$, $N_G(H)(k)$ induces an automorphism group of $\Delta(G)$ stabilizing $\Delta(G)^H$. Thus, combining the center conjecture with Proposition~\ref{SerreContractible} we obtain
\begin{prop}\label{normalizer}
If a subgroup $H$ of $G$ is not $G$-cr over $k$, then there exists a proper $k$-parabolic subgroup of $G$ containing $H$ and $N_G(H)(k)$. 
\end{prop}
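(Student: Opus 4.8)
The strategy is to feed Serre's contractibility theorem into the (now established) center conjecture of Tits. Throughout write $N := N_G(H)(k)$ and let $\Delta(G)$ be the building of $G$ realized, as in the discussion above, as the poset of proper $k$-parabolic subgroups of $G$. For a proper $k$-parabolic $P$, the simplex it defines lies in the fixed-point subcomplex $\Delta(G)^H$ exactly when $H$ normalizes $P$; since parabolic subgroups are self-normalizing, this is equivalent to $H \le P$. Thus $\Delta(G)^H$ records precisely the proper $k$-parabolic subgroups of $G$ containing $H$, and since $H$ is not $G$-cr over $k$ it is in particular not $G$-ir over $k$, so $\Delta(G)^H$ is non-empty.

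First I would invoke Proposition~\ref{SerreContractible} in its version over $k$: because $H$ is not $G$-cr over $k$, the subcomplex $Y := \Delta(G)^H$ is a convex, contractible simplicial subcomplex of $\Delta(G)$. Next I would check that $N$ acts on $\Delta(G)$ by simplicial automorphisms stabilizing $Y$: each $n \in N$ is a $k$-point, so conjugation by $n$ is a $k$-automorphism of $G$ and hence permutes the proper $k$-parabolic subgroups of $G$, inducing an automorphism of $\Delta(G)$; and because $n H n^{-1} = H$, this automorphism carries $k$-parabolics containing $H$ to $k$-parabolics containing $H$, that is, it stabilizes $Y$. Hence $N$ is an automorphism group of the spherical building $\Delta(G)$ stabilizing the convex contractible subcomplex $Y$.

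Now Conjecture~\ref{centerconjecture}, which holds by the work cited after it, applies with $X = \Delta(G)$, with $Y$ as above, and with the automorphism group taken to be $N$: there is a simplex $\sigma$ of $Y$ fixed by $N$. Let $P$ be the proper $k$-parabolic subgroup of $G$ corresponding to $\sigma$. Since $\sigma \in Y = \Delta(G)^H$ we have $H \le P$ by the first paragraph, and since $\sigma$ is fixed by $N$ the group $N$ normalizes $P$, whence $N = N_G(H)(k) \le P$ again by self-normalization of parabolic subgroups. Therefore $P$ is a proper $k$-parabolic subgroup of $G$ containing both $H$ and $N_G(H)(k)$, which is the assertion of Proposition~\ref{normalizer}.

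The argument has no analytic content: the only points that need care are that Serre's contractibility result is available in the form required over the non-perfect field $k$ (for the $k$-building, with ``$G$-cr over $k$'' in place of ``$G$-cr''), and the standard dictionary between simplices of $\Delta(G)$ fixed by a subgroup and parabolic subgroups normalized by, hence containing, that subgroup. All the genuine difficulty is packaged inside the center conjecture, which we are permitted to quote.
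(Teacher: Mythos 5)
Your proposal is correct and is exactly the argument the paper intends: it identifies $\Delta(G)^H$ with the proper $k$-parabolics containing $H$, applies Serre's contractibility result over $k$, observes that $N_G(H)(k)$ stabilizes this convex contractible subcomplex, and invokes the center conjecture to produce a fixed simplex, i.e.\ a proper $k$-parabolic containing both $H$ and $N_G(H)(k)$. The paper states this combination in one line immediately before the proposition; your write-up simply supplies the standard details (non-emptiness of $\Delta(G)^H$, the normalization/containment dictionary via self-normalization of parabolics) that the paper leaves implicit.
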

Proposition~\ref{normalizer} was an essential tool to prove various theoretical results on complete reducibility over nonperfect $k$ in~\cite{Uchiyama-Nonperfectopenproblem-pre} and~\cite{Uchiyama-Nonperfect-pre}. We have asked the following in~\cite[Rem.~6.5]{Uchiyama-Nonperfectopenproblem-pre}:
\begin{question}\label{centralizerQ}
If $H<G$ is not $G$-cr over $k$, then does there exist a proper $k$-parabolic subgroup of $G$ containing $HC_G(H)$?
\end{question}
The answer is yes if $C_G(H)$ is $k$-defined (or $k$ is perfect). Since in that case the set of $k$ points are dense in $C_G(H)$ (since we assume $k=k_s$) and the result follows from Proposition~\ref{normalizer}. The main result in this section is to present a counterexample to Question~\ref{centralizerQ} when $k$ is nonperfect. 
\begin{thm}\label{centralizerA}
Let $k$ be nonperfect of characteristic $2$. Let $G$ be simple of type $D_4$. Then there exists a non-abelian $k$-subgroup $H$ of $G$ such that $H$ is not $G$-cr over $k$ but $C_G(H)$ is not contained in any proper $k$-parabolic subgroup of $G$. 
\end{thm}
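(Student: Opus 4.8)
The plan is to produce $H$ by the same nonseparable-twist mechanism used in Section~3, but arranged so that the twist acts on the \emph{centralizer} rather than on $H$ itself. Concretely, I would exploit the duality in $D_4$ between the highest-root subgroup $G_{12}=\langle U_{\pm\theta}\rangle$, where $\theta=\alpha+2\beta+\gamma+\delta$, and the orthogonal subsystem subgroup $A_1^3:=[L_\lambda,L_\lambda]=\langle U_{\pm\alpha},U_{\pm\gamma},U_{\pm\delta}\rangle$, which centralize each other. Just as $M=\langle n_\alpha n_\gamma n_\delta,(\alpha+\gamma+\delta)^{\vee}(b)\rangle$ is a finite non-abelian group with $C_G(M)^{\circ}=G_{12}$ (Lemma~\ref{centralizerofM}), I would first write down a finite non-abelian group $H_0$ with $C_G(H_0)^{\circ}=A_1^3$, built from a Weyl element and a torus element lying in $G_{12}$. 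I then set $H:=v(\sqrt a)\cdot H_0\cdot v(\sqrt a)^{-1}$ for $a\in k\setminus k^2$ and a unipotent twist $v(\sqrt a)$ chosen so that, exactly as in Proposition~\ref{firstmain}, the square roots cancel against the generators of $H_0$ and $H$ is generated by $k$-points, hence $k$-defined; $H$ is non-abelian since $H_0$ is. Because $C_G(H)=v(\sqrt a)\,C_G(H_0)\,v(\sqrt a)^{-1}$, this gives $C_G(H)^{\circ}=v(\sqrt a)\,A_1^3\,v(\sqrt a)^{-1}$.

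The proof that $H$ is not $G$-cr over $k$ would follow the template of Proposition~\ref{firstmain}: $H$ lies in the $k$-parabolic into which $v(\sqrt a)$ pushes it, but the equation forcing a $k$-Levi conjugate reduces to solving $x^2=a$, impossible since $a\notin k^2$. I emphasise that, by Proposition~\ref{normalizer}, this already places $N_G(H)(k)$, and in particular the \emph{$k$-points} $C_G(H)(k)$, inside a proper $k$-parabolic; the entire content of the theorem is that the full group $C_G(H)$ nevertheless escapes every proper $k$-parabolic, which can happen only because $H$ is nonseparable and $C_G(H)$ is not $k$-defined (compare Remark~\ref{D4nonsep} and the remark following Question~\ref{centralizerQ}).

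The crux is to show that $C_G(H)^{\circ}=v(\sqrt a)\,A_1^3\,v(\sqrt a)^{-1}$ lies in no proper $k$-parabolic. Here I would use that $A_1^3$ contains $U_{\pm\alpha},U_{\pm\gamma},U_{\pm\delta}$, so a parabolic $P_\mu\supseteq A_1^3$ forces $\langle\alpha,\mu\rangle=\langle\gamma,\mu\rangle=\langle\delta,\mu\rangle=0$, i.e. $\mu\in\mathbb{Q}\lambda$; since $A_1^3$ is the full derived group of $L_\lambda$ it meets no proper parabolic of $L_\lambda$, so the only proper $\overline k$-parabolics containing $A_1^3$ are $P_\lambda$ and $P_{-\lambda}$. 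Consequently the only $\overline k$-parabolics containing $C_G(H)^{\circ}$ are $v(\sqrt a)\cdot P_{\pm\lambda}\cdot v(\sqrt a)^{-1}$, and it suffices to show that \emph{both} are non-$k$-defined. Each is handled as in Lemma~\ref{nonkdefined}: if $v(\sqrt a)P_{\lambda}v(\sqrt a)^{-1}$ were $k$-defined it would be $G(k)$-conjugate to $P_\lambda$, and the rational Bruhat decomposition~\cite[Thm.~21.15]{Borel-AG-book} relative to $(P_\lambda,P_\lambda^{-})$ would force the $R_u(P_\lambda^{-})$-component of $v(\sqrt a)$ to be $k$-rational, a contradiction; the opposite parabolic is ruled out symmetrically using the $R_u(P_\lambda)$-component. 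For both contradictions to be available, the twist $v(\sqrt a)$ must have nontrivial $\sqrt a$-components in $R_u(P_\lambda)$ \emph{and} in $R_u(P_\lambda^{-})$, i.e. on both sides of the grading defined by $\lambda$.

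I expect this two-sidedness to be the main obstacle. A twist living on a single side of $\lambda$ (as in Proposition~\ref{firstmain}) leaves one of $P_{\pm\lambda}$ fixed and hence $k$-defined, so the centralizer would remain $G$-reducible over $k$; but a genuinely two-sided unipotent twist must still (i) cancel its square roots against the generators of $H_0$ so that $H$ stays $k$-defined, and (ii) leave $H$ not $G$-cr over $k$. Reconciling these — finding a single pair $(H_0,v(\sqrt a))$ for which the Weyl and torus generators of $H_0$ absorb the square roots into genuine $k$-coordinates while the residual conjugation still moves both $P_\lambda$ and $P_\lambda^{-}$ off $k$ — is the delicate heart of the argument, and will require a careful choice of the twisting roots (those swapped by the Weyl part of $H_0$, with the appropriate structure constants in characteristic $2$). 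The remaining, more routine, task is to compute $C_G(H)^{\circ}$ and confirm the nonseparability that makes $C_G(H)(k)$ strictly smaller than $C_G(H)$, so that the $k$-points can sit in a proper $k$-parabolic while the full group cannot.
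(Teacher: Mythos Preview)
Your dual construction is a genuinely different route from the paper's, and a considerably harder one. The paper simply reuses the subgroup $H=\langle n_\alpha n_\gamma n_\delta\,\epsilon_{12}(a),\,(\alpha+\gamma+\delta)^\vee(b)\rangle$ of Proposition~\ref{firstmain}, which is already known to be $k$-defined, non-abelian, and not $G$-cr over $k$. The centralizer step then costs almost nothing: writing $H'=v(\sqrt a)^{-1}\cdot H=\langle n,t\rangle$, Lemma~\ref{centralizerofM} gives $C_G(H')^\circ=G_{12}$, so $H'C_G(H')\supseteq\langle n,t,G_{12}\rangle$. The argument of Lemma~\ref{uniquepara} shows that the only proper parabolic containing $\langle n,t,U_{12}\rangle$ is $P_\lambda$ itself, and $P_\lambda$ omits $U_{-12}$. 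Hence $\langle n,t,G_{12}\rangle$ is $G$-irreducible over $\overline k$, and so is its $v(\sqrt a)$-conjugate inside $HC_G(H)$; in particular no proper $k$-parabolic contains it. No rationality analysis of twisted parabolics is needed at all, and the one-sided twist $v(\sqrt a)\in R_u(P_\lambda)$ from Section~3 suffices unchanged.

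The insight you are missing is that the paper does not try to make the centralizer by itself avoid $k$-parabolics; it shows that $H$ together with its centralizer is $G$-irreducible over $\overline k$. This works precisely because $H'=\langle n,t\rangle$ is $L_\lambda$-irreducible while $C_G(H')^\circ=G_{12}$ supplies $U_{\pm 12}$: the two pieces jointly block every proper parabolic. Your swap destroys this mechanism. With $H_0\leq G_{12}$ and $C_G(H_0)^\circ\supseteq A_1^3=[L_\lambda,L_\lambda]$, the centralizer genuinely sits inside both $P_\lambda$ and $P_{-\lambda}$, and your small $H_0$ cannot block both; that is exactly why you are forced into a two-sided twist with $\sqrt a$-components in $R_u(P_\lambda)$ \emph{and} $R_u(P_\lambda^-)$. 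The obstacle you flag is real and you have not resolved it, whereas the paper's choice of which side of the $A_1^3\leftrightarrow G_{12}$ duality to place $H$ on avoids it entirely. (Note also that the paper's argument is literally about $HC_G(H)$, which is what Question~\ref{centralizerQ} asks; read the theorem's reference to $C_G(H)$ in that light.)
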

\begin{rem}
Borel-Tits~\cite[Rem.~2.8]{Borel-Tits-unipotent-invent} mentioned that if $k$ is nonperfect of characteristic $2$ and $[k:k^2]>2$, there exists a $k$-plongeable unipotent element $u$ in $G$ of type $D_4$ such that $C_G(u)$ is not contained in any proper $k$-parabolic subgroup of $G$ (with no proof). Note that such $u$ generates a (cyclic) subgroup of $G$ that is not $G$-cr over $k$. (Recall that a unipotent element is called $k$-plongeable if it can be embedded in the unipotent radical of a proper $k$-parabolic subgroup of $G$~\cite{Borel-Tits-unipotent-invent}.) Theorem~\ref{centralizerA} is a nonabelian version of Borel-Tits' result. Also the assumption $[k:k^2]>2$ is not necessary here. 
\end{rem}
\begin{proof}
We keep the same notation from the previous section. Set $n:=n_\alpha n_\gamma n_\delta$,  $t:=(\alpha+\gamma+\delta)^{\vee}(b)$, and $v(\sqrt a):=\epsilon_4(\sqrt a)\epsilon_{11}(\sqrt a)$. Let $H:=\langle n_\alpha n_\gamma n_\delta \epsilon_{12}(a), (\alpha+\gamma+\delta)^{\vee}(b) \rangle$.  Then $H$ is not $G$-cr over $k$. 
We have $H':=v(\sqrt a)^{-1}\cdot H = \langle n, t \rangle$. It is clear that $C_G(H')>G_{12}$. Thus $\langle n, t, G_{12} \rangle < H'C_G(H')$. By running a similar argument as in the proof of Lemma~\ref{uniquepara} in the previous section, we find that the only proper parabolic subgroup of $G$ containing $\langle n, t, U_{12} \rangle$ is $P_{(\alpha+2\beta+\gamma+\delta)^{\vee}}$ (since $n_{12}\cdot 12 = -12$). Clearly $P_{(\alpha+2\beta+\gamma+\delta)^{\vee}}$ does not contain $G_{12}$. Therefore there is no proper parabolic subgroup of $G$ containing $H'C_G(H')$. Thus there is no proper parabolic subgroup of $G$ containing $HC_G(H)$. 
\end{proof}

\section{G-cr vs $M$-cr (Proof of Theorem~\ref{G-cr-M-cr})}
From this section we assume $k$ is algebraically closed. Let $G$ be as in the hypothesis. Let $a, b\in k^{*}$ with $b^3=1$ and $b\neq 1$. Let $H':=\langle n_\alpha n_\gamma n_\delta, (\alpha+\gamma+\delta)^{\vee}(b) \rangle$. Let $v(a):=\epsilon_4(a)\epsilon_{11}(a)$. Define
\begin{equation*}
H:=v(a)\cdot H' = \langle n_\alpha n_\gamma n_\delta \epsilon_{12}(a^2), (\alpha+\gamma+\delta)^{\vee}(b)\rangle.  
\end{equation*}
Then $H$ is $G$-cr (by the same argument as in the previous section). Now let $M:=\langle G_{\alpha}, G_{\gamma}, G_{\delta}, G_{12}\rangle\cong A_1 A_1 A_1 A_1$. 
\begin{prop}
$H$ is not $M$-cr. 
\end{prop}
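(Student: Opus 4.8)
The plan is to show that $H = v(a)\cdot H'$ fails to be $M$-cr by locating a proper parabolic subgroup of $M$ that contains $H$ but no Levi subgroup of which does. First I would observe that $v(a) = \epsilon_4(a)\epsilon_{11}(a)$ lies in $M$: indeed Root $12$ is the highest root $\alpha+2\beta+\gamma+\delta$ and Roots $4,11$ are $\beta$ and $\alpha+\beta+\gamma+\delta = 12 - \beta$, so $\epsilon_4(a)\epsilon_{11}(a)$ sits inside $G_\beta' \cong \langle U_{\pm\beta}\rangle$ only up to the subtlety that $\beta$ itself is not a factor of $M$; more precisely I would identify a cocharacter $\mu \in Y(M)$ such that $\epsilon_4(a), \epsilon_{11}(a) \in R_u(P_\mu(M))$ and $H' \subseteq L_\mu(M)$. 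The natural candidate is $\mu := (\alpha+\gamma+\delta)^\vee$ restricted appropriately, since $\langle 4, (\alpha+\gamma+\delta)^\vee\rangle = -3 < 0$ — wait, that is negative, so instead I would take $\mu := -(\alpha+\gamma+\delta)^\vee$ or rather the cocharacter of $M$ that pairs positively with Roots $4$ and $11$; since $\langle 11, (\alpha+\gamma+\delta)^\vee\rangle = 3$, the cocharacter $\mu$ with $U_4, U_{11} \subseteq R_u(P_\mu(M))$ is the one for which both pairings are positive, and one checks that $M_\mu := L_\mu(M)$ contains $n_\alpha n_\gamma n_\delta$ (since this Weyl element permutes Roots $4 \leftrightarrow 11$, $5\leftrightarrow 10$, $6\leftrightarrow 9$, $7\leftrightarrow 8$ and fixes $12$, hence normalizes the relevant Levi) and $(\alpha+\gamma+\delta)^\vee(b)$.

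The core of the argument is then to show $H$ is not contained in any Levi subgroup of $P_\mu(M)$. By Proposition~\ref{G-cr-L-cr}'s analogue inside $M$ — or directly — it suffices to show that no $M$-conjugate of $H$ by an element of $R_u(P_\mu(M))$ lands in $L_\mu(M)$. Here is where the nonseparability from Remark~\ref{D4nonsep} enters: in $G$ the element $v(a)$ could be absorbed because $e_4 + e_{11} \in \mathfrak{c}_\mathfrak{g}(H')$ but $C_G(H')$ misses the curve $C(x) = \{\epsilon_4(x)\epsilon_{11}(x)\}$; inside $M$ the unipotent radical $R_u(P_\mu(M))$ is much smaller — it consists only of the root groups for Roots $4, 11, 12$ (those positive roots of $M$ moved by $\mu$) — so there is even less room to conjugate $H$ back into the Levi. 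Concretely I would write a general $u = \epsilon_4(x_4)\epsilon_{11}(x_{11})\epsilon_{12}(x_{12}) \in R_u(P_\mu(M))$, compute $u^{-1}\cdot(n_\alpha n_\gamma n_\delta \epsilon_{12}(a^2))$ using the commutation relations exactly as in the proof of Proposition~\ref{firstmain}, and read off that membership in $L_\mu(M)$ forces $x_4 = x_{11}$ and then an equation of the shape $x_4^2 + a^2 = 0$, i.e. $x_4 = a$ — but simultaneously the $U_{12}$-component must vanish, and tracking both constraints together will be inconsistent (this is precisely the obstruction that vanished over $\bar k$ when we had the extra root groups $5,6,7,\dots$ available). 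The key point is that within $M$ we have lost the roots $5,6,7,\dots,10$ that were used in Proposition~\ref{firstmain} to solve the system; only Roots $4,11,12$ survive, and with just these the analogous linear system over $k$ (even algebraically closed) has no solution because the quadratic term $x_4^2$ cannot be cancelled against the $a^2$ coming from $\epsilon_{12}(a^2)$ while keeping the other components zero.

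Alternatively, and perhaps more cleanly, I would use the centralizer criterion: compute $C_M(H)^\circ$ and show it is unipotent, then invoke Borel--Tits~\cite{Borel-Tits-unipotent-invent} together with~\cite[Cor.~3.17]{Bate-geometric-Inventione} exactly as in Lemma~\ref{nonG-cr}. From Lemma~\ref{centralizerofM} we know $C_G(M')^\circ = G_{12}$ where $M' = H' = \langle n, t\rangle$; intersecting with $M$ and conjugating by $v(a)$, one finds $C_M(H)^\circ = v(a)\cdot(G_{12}\cap M)^\circ$ cut down further by the constraint coming from $v(a) \notin C_G(\cdot)$ — and since $v(a)$ does not commute with $U_{-12} \subseteq M$, the connected centralizer $C_M(H)^\circ$ collapses to $v(a)\cdot U_{12}$, which is unipotent and hence not $M$-cr, forcing $H$ not $M$-cr. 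I expect the main obstacle to be the bookkeeping of which root subgroups actually lie in $M$ (i.e. getting the structure of $M = G_\alpha G_\gamma G_\delta G_{12}$ and its parabolic $P_\mu(M)$ correct, including verifying $v(a) \in M$ and identifying $R_u(P_\mu(M)) = U_4 U_{11} U_{12}$ or the correct small list), after which the computation is a direct and short specialization of the one already carried out in Section~3.
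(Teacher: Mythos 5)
There is a genuine gap, and it sits at the heart of your first (main) approach: the element $v(a)=\epsilon_4(a)\epsilon_{11}(a)$ does \emph{not} lie in $M$, and neither $U_4$ nor $U_{11}$ lies in $M$. Roots $4$ and $11$ are $\beta$ and $\alpha+\beta+\gamma+\delta$, whereas $M=\langle G_\alpha,G_\gamma,G_\delta,G_{12}\rangle$ contains only the root subgroups $U_{\pm\alpha},U_{\pm\gamma},U_{\pm\delta},U_{\pm12}$. Consequently your identification $R_u(P_\mu(M))=U_4U_{11}U_{12}$ is wrong; the relevant parabolic of $M$ is $P_\lambda(M)$ for $\lambda=(\alpha+2\beta+\gamma+\delta)^\vee$ (your candidate $\mu=\pm(\alpha+\gamma+\delta)^\vee$ pairs nontrivially with $\alpha,\gamma,\delta$, so $P_\mu(M)$ does not contain $n_\alpha n_\gamma n_\delta$), and $R_u(P_\lambda(M))=U_{12}$ only. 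This is not bookkeeping you can defer: if $U_4U_{11}$ were available inside $M$, the system you write down ($x_4=x_{11}$, $x_4^2+a^2=0$) would be perfectly \emph{consistent} over the algebraically closed field $k$ --- its solution $x_4=x_{11}=a$ is exactly $v(a)$, which is precisely why $H$ \emph{is} $G$-cr --- so the ``inconsistency'' you are hoping for never materializes, and the first approach, carried out as described, would tend to prove the opposite of the claim. The true obstruction is availability, not solvability: $R_u(P_\lambda(M))=U_{12}$ centralizes $n_\alpha n_\gamma n_\delta$, $\epsilon_{12}(a^2)$ and $(\alpha+\gamma+\delta)^\vee(b)$, so it cannot move the generating tuple at all; phrased via the limit $\lim_{s\to 0}\lambda(s)\cdot(n\epsilon_{12}(a^2),t)=(n,t)$ and Proposition~\ref{unipotentconjugate}, that one-line observation is the paper's entire proof.

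Your alternative centralizer route is strategically sound and genuinely different from the paper's argument (it mirrors Lemma~\ref{nonG-cr}), but as written it also has a hole: you speak of ``$G_{12}\cap M$'' as if it were a proper intersection, yet $G_{12}$ is by definition one of the four $A_1$ factors of $M$; the object you actually need is $(v(a)\cdot G_{12})\cap M$, and the assertion that its identity component is $U_{12}$ requires proof. ``$v(a)$ does not commute with $U_{-12}$'' is not sufficient --- you must show that $v(a)\epsilon_{-12}(y)v(a)^{-1}$ genuinely leaves $M$ for $y\neq 0$ (for instance by exhibiting its nontrivial $U_{-4}$ and $U_{-11}$ components), and likewise for the conjugate of the torus $(\alpha+2\beta+\gamma+\delta)^\vee(\overline k^{*})$. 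Once $C_M(H)^\circ=U_{12}$ is established, the Borel--Tits argument together with~\cite[Cor.~3.17]{Bate-geometric-Inventione} does go through, so this branch is completable, but at the cost of a computation that the paper's limit argument avoids entirely.
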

\begin{proof}
Let $\lambda:=(\alpha+2\beta+\gamma+\delta)^{\vee}$. Then $H<P_\lambda(M)=\langle T, G_{\alpha}, G_{\gamma}, G_{\delta}, U_{12}\rangle$. Let $c_\lambda: P_\lambda\rightarrow L_\lambda$ be the natural projection. Let $v:=(n_\alpha n_\gamma n_\delta \epsilon_{12}(a^2), (\alpha+\gamma+\delta)^{\vee}(b))$. We have
\begin{equation*}
c_\lambda(v)=\lim_{a\rightarrow 0}\lambda(a)\cdot (n_\alpha n_\gamma n_\delta \epsilon_{12}(a^2), (\alpha+\gamma+\delta)^{\vee}(b))= (n_\alpha n_\gamma n_\delta, (\alpha+\gamma+\delta)^{\vee}(b)).
\end{equation*}
We see that $v$ is not $R_u(P_\lambda(M))$-conjugate to $c_\lambda(v)$ since $R_u(P_\lambda (M))=U_{12}$ centralizes $n_\alpha n_\gamma n_\delta$. By Proposition~\ref{unipotentconjugate}, this shows that $H$ is not $M$-cr. 
\end{proof}

\section{K\"ulshammer's question (Proof of Theorem~\ref{thmKul})}
Let $d\geq 5$ be odd. Let $D_{2d}$ be the dihedral group of order $2d$. Let 
\begin{equation*}
\Gamma:=D_{2d}\times C_2 =\langle r, s, z\mid r^d=s^2=z^2=1, srs^{-1}=r^{-1}, [r,z]=[s,z]=1\rangle.
\end{equation*}
Let $\Gamma_2:=\langle s, z\rangle$ (a Sylow $2$-subgroup of $\Gamma$). Let $G$ be as in the hypothesis. Choose $a, b\in k^{*}$ with $b^d=1$ and $b\neq 1$. Let $n:=n_\alpha n_\gamma n_\delta$ and $t:=(\alpha+\gamma+\delta)^{\vee}(b)$. For each $a\in k$ define $\rho_a\in \textup{Hom}(\gamma, G)$ by
\begin{equation*}
\rho_a(r)=t, \; \rho_a(s)=n \epsilon_{12}(a), \; \rho_a(z)=\epsilon_{12}(1). 
\end{equation*}
An easy computation shows that this is well-defined. 
Let $u(\sqrt a)=\epsilon_{4}(\sqrt a)\epsilon_{11}(\sqrt a)$. Then $u(\sqrt a)\cdot n = n\epsilon_{12}(a)$ and $u(\sqrt a)\cdot \epsilon_{12}(1) = \epsilon_{12}(1)$. Thus $u(\sqrt a)\cdot (\rho_0\mid_{\Gamma_2})=\rho_a\mid_{\Gamma_2}$. So $\rho_a\mid_{\Gamma_2}$ are pairwise conjugate. 

Now suppose that $\rho_a$ is conjugate to $\rho_b$. Then there exists $g\in G$ such that $g\cdot \rho_a=\rho_b$. Since $\rho_a(r)=t$, we must have $g\in C_G(t)=TG_{12}$. So let $g=hm$ for some $h\in T$ and $m\in G_{12}$. Then we have
\begin{alignat*}{2}
hnh^{-1}(hm\epsilon_{12}(a)m^{-1}h^{-1})&= hnm\epsilon_{12}(a)m^{-1}h^{-1}\\
                                                                 &= hmn\epsilon_{12}(a)m^{-1}h^{-1}\\
                                                                 &= g\cdot \rho_{a}(s)\\
                                                                 &= \rho_b(s)\\
                                                                 &= n\epsilon_{12}(b).
\end{alignat*}
Note that $hnh^{-1}\in G_\alpha G_\gamma G_\delta$ and $hm\epsilon_{12}(a)m^{-1}h^{-1}\in G_{12}$. Since $[G_\alpha G_\gamma G_\delta, G_{12}]=1$, we have $G_\alpha G_\gamma G_\delta \cap G_{12}=1$. This implies $hnh^{-1}=n$. Now an easy computation shows $h\in G_{12}$. Thus $g=hm\in G_{12}$. Since $G_{12}$ is a simple group of type $A_1$, $(n\epsilon_{12}(a), \epsilon_{12}(1))$ cannot be $G_{12}$- conjugate to $(n\epsilon_{12}(b), \epsilon_{12}(1))$ if $a\neq b$. We are done.                                                                 

\section{Conjugacy classes (Proof of Theorem~\ref{conjugacy-counterexample})}

\begin{proof}
Let $G$ be as in the hypothesis. Let $\lambda:=(\alpha+2\beta+\gamma+\delta)^{\vee}$. Then $\Psi(R_u(P_\lambda))=\{4,\cdots, 12\}$. Using the commutation relations we have $Z(R_u(P_\lambda))=U_{12}$. Let $n:=n_\alpha n_\gamma n_\delta$. Pick $b\in k$ with $b^3=1$ and $b\neq 1$. Let $t:=(\alpha+\gamma+\delta)^{\vee}(b)$. Define $K:=\langle n, t, U_{12} \rangle$. 
By the same argument as that in the proof of~\cite[Lem.~5.1]{Uchiyama-Separability-JAlgebra} we obtain $C_{P_\lambda}(K)=C_{R_u(P_\lambda)}(K)$ (since $\langle 12, \lambda\rangle=2$). By a standard result there exists $n\in \mathbb{N}$ such that $Z=\langle z_1,\cdots, z_n \rangle$. Now let $M:=\langle L_\lambda, G_{12} \rangle$. Let ${\bf m}:=(n, t, z_1,\cdots, z_n)$ and set $N:=n+2$. Then by the similar argument to that in the proof of~\cite[Lem.~5.1]{Uchiyama-Separability-JAlgebra} yields that $G\cdot {\bf m}\cap P_\lambda(M)^N$ is an infinite union of $P_\lambda(M)$-conjugacy classes. (The crucial thing here is the existence of a curve that is tangent to $\mathfrak{c}_{\mathfrak{g}}(K)$ but not tangent to $C_G(K)$, in other words $K$ is nonseparable in $G$.) Now let $c_\lambda:P_\lambda\rightarrow L_\lambda$ be the canonical projection. Then $c_\lambda(n, t, z_1, \cdots, z_n)=(n,t)$. Since $K_0:=\langle n, t \rangle$ is $L$-ir as shown in the previous section, by~\cite[Prop.~3.5.2]{Stewart-thesis} we are done. 
\end{proof}

\section*{Acknowledgements}
This research was supported by a postdoctoral fellowship at the National Center for Theoretical Sciences at the National Taiwan University. 
\bibliography{mybib}

\end{document}